\newsavebox{\mybox}
\newtheorem{theorem}{Theorem}
\numberwithin{theorem}{section}
\newtheorem{corollary}[theorem]{Corollary}
\newtheorem{proposition}[theorem]{Proposition}
\newtheorem{lemma}[theorem]{Lemma}
\theoremstyle{definition}
\numberwithin{theorem}{section}
\newtheorem{remark}[theorem]{Remark}
\newtheorem{definition}[theorem]{Definition}
\newcommand{\bt}{\begin{theorem}}
\newcommand{\et}{\end{theorem}}
\newcommand{\bd}{\begin{definition}}
\newcommand{\ed}{\end{definition}}
\newcommand{\ind}{{\rm{ind}}}
\newcommand{\End}{{\rm{End}}}
\tikzset{
	node/.style={
		circle,
		fill,
		inner sep=0.5pt
	},
	font=\footnotesize,
	every pin edge/.style={
		very thin,
		shorten <=1.5pt
	}
}
	\ifodd\value{page}\relax
\def\VR{\kern-\arraycolsep\strut\vrule &\kern-\arraycolsep}
\numberwithin{equation}{section}
\title {On a Bruhat decomposition related to the Shalika subgroup  of $GL(2n)$}
\author{C. Harshitha}
\address{C. Harshitha, Department of Mathematics, Indian Institute of Science Education and Research Tirupati, Srinivasapuram,
	Yerpedu, Tirupati, Andhra Pradesh, 517619,
	India}
\email{harshitha.c@students.iisertirupati.ac.in}
\author{C. G. Venketasubramanian}
\address{C. G. Venketasubramanian, Department of Mathematics, Indian Institute of Science Education and Research Tirupati, Srinivasapuram,
	Yerpedu, Tirupati, Andhra Pradesh, 517619,
	India}
\email{venketcg@iisertirupati.ac.in}
\begin{document}

\subjclass[2020]{Primary 20G05; Secondary 22E50}
\keywords{General Linear Group, Double cosets, Principal series representations, Twisted Jacquet module}
	
\maketitle

\begin{abstract}  
{\footnotesize  Let $F$ be a non-archimedean local field or a finite field.  In this article, we obtain an explicit and  complete set of double coset representatives for $S\backslash GL_{2n}(F)/Q$ where $S$ is the Shalika subgroup and $Q$ a maximal parabolic subgroup of the group $GL_{2n}(F)$ of invertible $2n\times 2n$ matrices.  We compute the cardinality of $S\backslash GL_{2n}(F)/Q$  and also give an alternate perspective on the double cosets arising intrinsically from certain subgroups which are relevant for applications in representation theory.  Finally, if $Q$ is a maximal parabolic subgroup of the type $(r,2n-r),$ we prove that $S\backslash GL_{2n}(F)/Q$ is in one to one correspondence with $\Delta S_n\backslash S_{2n}/S_{r}\times S_{2n-r}$ leading to a Bruhat decomposition.  The results and proofs discussed in this article are valid over any arbitrary field $F$ even though our motivation is from representation theory of $p$-adic and finite linear groups.}
\end{abstract}
\tableofcontents
\section{Introduction}
\subsection{Background and Motivation}
\subsubsection{}
Let $G$ be a group and $H,K$ be any two subgroups of $G.$ A double coset of the pair $(H,K)$ in $G$ is a subset $HgK$ where $g\in G.$ A subset $C$ of $G$ is said to be a complete set of double coset representatives for $(H,K)$ in $G$ if $G$ can be written as a disjoint union of the double cosets $HgK$ where $g\in C.$ Determining a complete set of double coset representatives is not just an interesting problem in its own right,  but also has very important applications in the field of representation theory of groups.

\subsubsection{}
It is a classical result of Mackey \cite{Serre} that the restriction to a subgroup $K$ of a representation of a finite group $G$ induced from a subgroup $H$ can be determined if one knows a complete set of double coset representatives of $(H,K)$ in $G.$ This result of Mackey can be generalized  to smooth representations of $p$-adic groups \cite{BD} if the pair of subgroups $(H,K)$ are both closed in $G.$ We also refer the reader to \cite{BZ2} for the role of double cosets in the Geometric Lemma of Bernstein-Zelevinsky which is an important tool in representations of $p$-adic groups. For a list of some other known cases of determination of double cosets, we refer the reader to \cite{Neretin}.

\subsubsection{}
For the present article, our motivation comes from the study of twisted Jacquet modules (see \cite[\S 2.30]{BZ1}) of representations of $p$-adic groups.  An important class of representations in the category of smooth complex representations of reductive  $p$-adic groups as well as in the category of finite dimensional complex representations of reductive groups defined over finite fields are  principal series representations. A principal series representation (see \cite{BZ1,BZ2}) is an induced representation of the form $\ind_Q^G(\rho)$ where $G$ is the ambient group, $Q$ is  a parabolic subgroup of $G$ with Levi decomposition $Q=LU$ and $\rho$ is a representation of the Levi subgroup $L$ inflated trivially across the unipotent radical $U$ to obtain a representation of $Q.$ 
\subsubsection{}
A problem which has attracted much attention especially in recent times is to compute the structure of twisted Jacquet modules of  irreducible smooth representations of $p$-adic groups as well as those of irreducible complex representations of finite linear groups \cite{ KHJNT,DPDegIMRN,DPDegTIFR,SV}.  In the context of computing the twisted Jacquet module of non-cuspidal irreducible representations, a first step is to compute the twisted Jacquet module of  principal series representations.  An essential component to achieving this is to determine a complete set of double coset representatives for the space $S\backslash G/Q$ where $G$ is the ambient group, $Q$ is the parabolic subgroup from which the principal series representation is parabolically induced from and $S$ is the stabilizer in a parabolic subgroup $P$ with Levi decomposition $P=MN$ of a character $\psi$ of $N,$ with respect to which the twisted Jacquet module is determined.

\subsubsection{}
In the present work, we obtain a complete set of double coset representatives for $S\backslash G /Q$ where $G$ is the group $GL_{2n}(F),$ $Q$ is a maximal parabolic subgroup $P_{r,2n-r}$ of $GL_{2n}(F)$ associated to the partition $(r,2n-r)$ of $2n$ and $S$ is the Shalika subgroup of $GL_{2n}(F).$
We also count the number of such double cosets.  We wish to point out that in the case of an archimedean local field (i.e., where $F$ is either $\mathbb{R}$ or  $\mathbb{C}$), such a double coset decomposition is obtained in \cite[Propisition 4.1]{Geng} while studying generalized Shalika periods, where the parabolic $Q$ is a general standard parabolic subgroup of $GL_{2n}(F)$. The proof of \cite[Proposition 4.1]{Geng} is based on the general theory of reductive algebraic groups and their root systems. In comparison, our proofs are coordinate free, elementary and independent of the choice of the base field $F,$ even though our motivation comes from representation theory of groups defined over $p$-adic and  finite fields. Also, our double coset representatives lead to a succinct  Bruhat decomposition of $GL(2n)$ with respect to the pair $(S,Q).$ All our results are valid over any arbitrary field $F$ and in particular, for the purposes of the results in this article the reader may take $F$ to be any field.

\subsection{Statements of Main Results}
\subsubsection{}
We shall set some basic notations so as to state the main result of our article. Let $F$ be a non-archimedean local field or  a finite field. The $F$-vector space consisting of all $m\times n$ matrices shall be denoted by   $\mathcal{M}_{m,n}(F).$ If $m=n,$  we shall write $\mathcal{M}_{n}(F)$ instead of $\mathcal{M}_{n,n}(F).$  Let $GL_{n}(F)$ denote the group of invertible matrices in $\mathcal{M}_{n}(F).$ Let $r$ be an integer such that $1\leq r < 2n.$ The maximal standard parabolic subgroup of $GL_{2n}(F)$ associated to the partition $(r,2n-r)$ of $2n$ is  \begin{equation}\label{maxPr}
	P_{r,2n-r} = \left\{ \begin{pmatrix}  g_1 & x\\ 0 & g_2 \end{pmatrix} : g_1 \in GL_r(F) , g_2 \in GL_{2n-r}(F), x \in \mathcal{M}_{r,2n-r}(F) \right\}.
\end{equation}
The group $P_{n,n}$ shall be denoted by $P$.  Let

\begin{equation}\label{definitionSpsi}
	S = \left\{ \begin{pmatrix}
		g & x\\ 0 & g 
	\end{pmatrix} : g \in GL_n(F), x \in \mathcal{M}_n(F)\right\}.
\end{equation}  
 The subgroup $S$ is called the Shalika subgroup of $GL_{2n}(F)$ in the literature.

 \subsubsection{}
 In this article, we give a coordinate free approach to determining a complete set of double coset representatives for $S\backslash GL_{2n}(F)/P_{r,2n-r}.$  These results are used  in \cite{HV2} to compute the structure of twisted Jacquet modules of principal series representations of the group $GL_{2n}(F).$ Our work is motivated by that of D. Prasad (\cite{DPDegIMRN} and \cite[Propsoition 7.1]{GS}) obtained  in the case of $GL_4(F)$ where similar double cosets were determined to compute the twisted Jacquet module of a principal series representation of $GL_4(F)$ induced from the maximal parabolic subgroup $P_{2,2}.$

 \subsubsection{}We now state the first main result of our article.
\begin{theorem}\label{mainintro}
	Let $n$ and  $r$ be integers such that $1\leq r<2n.$ For integers $k$ and $l$ such that $\max\{0,r-n\} \leq k \leq \min\{r,n\}$ and $\max\{0,r-n\}\leq l \leq \min\{k,r-k\}$	define $w_{k,l}\in GL_{2n}(F)$ by
\begin{equation}\label{definitionwklintro}
	w_{k,l}=\begin{bmatrix}
		I_k & 0& 0& 0&0 &0 \\
		0&0 & 0& I_{n-k}&0 & 0\\
		0& I_l &0 &0 &0 &0 \\
		0&0 &0 &0 & I_{k-l}&0 \\
		0& 0& I_{r-(k+l)}&0 &0 &0 \\
		0& 0&0 &0 &0 & I_{n-r+l}
	\end{bmatrix}.
\end{equation}
Then,
\begin{equation}\label{doublecosetwkl}
	\left\{w_{k,l}: \max\{0,r-n\}\leq k \leq \min\{r,n\}, \max\{0,r-n\}\leq l \leq \min\{k,r-k\} \right\}
\end{equation} is a complete set of $(S,P_{r,2n-r})$-double coset representatives in $GL_{2n}(F).$
\end{theorem}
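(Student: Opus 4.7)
The plan is to recast double cosets as $S$-orbits on the Grassmannian of $r$-dimensional subspaces of $F^{2n}$, via the bijection $gP_{r,2n-r}\leftrightarrow g\cdot W_0$ with $W_0=\mathrm{span}(e_1,\ldots,e_r)$. Write $F^{2n}=U_1\oplus U_2$ with $U_1=\mathrm{span}(e_1,\ldots,e_n)$ and $U_2=\mathrm{span}(e_{n+1},\ldots,e_{2n})$, and identify both summands with $F^n$ via the canonical matching $e_i\leftrightarrow e_{n+i}$. To each $r$-plane $V\subset F^{2n}$ we attach two numerical invariants
\[
k(V)=\dim(V\cap U_1),\qquad l(V)=\dim\bigl((V\cap U_1)\cap \pi_2(V)\bigr),
\]
where $\pi_2$ is the projection onto $U_2$ and the intersection defining $l(V)$ is taken inside $F^n$. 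Since a Shalika element $s=\begin{pmatrix}g&x\\0&g\end{pmatrix}$ stabilises $U_1$ and acts by $g$ on both $U_1$ and $F^{2n}/U_1\cong U_2$, both $k(V)$ and $l(V)$ will be $S$-invariant.

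The heart of the proof is a two-step reduction to a normal form. Given $V$, set $A=V\cap U_1$ and $B=\pi_2(V)$. Picking a section of $V\twoheadrightarrow B$ produces a well-defined class $\phi_V\in \Hom(B,U_1/A)$ which, together with $(A,B)$, determines $V$. A short computation will show that the unipotent element $\begin{pmatrix}I&x\\0&I\end{pmatrix}\in S$ translates $\phi_V$ by the class of $x|_B$ modulo $A$; as $x\in \mathcal{M}_n(F)$ is arbitrary, this translation exhausts $\Hom(B,U_1/A)$ and $\phi_V$ may be killed. After this first step $V=A\oplus B$ is an internal direct sum, and the diagonal element $\begin{pmatrix}g&0\\0&g\end{pmatrix}\in S$ moves $(A,B)$ by the diagonal $GL_n$-action on pairs of subspaces of $F^n$, whose orbits are classified by the triple $(\dim A,\dim B,\dim(A\cap B))=(k,r-k,l)$. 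Consequently $(A,B)$ can be brought to the standard form $\bigl(\mathrm{span}(e_1,\ldots,e_k),\;\mathrm{span}(e_1,\ldots,e_l,e_{k+1},\ldots,e_{r-l})\bigr)$, and reading off basis vectors identifies the resulting $V$ with the span of the first $r$ columns of $w_{k,l}$, namely $w_{k,l}\cdot W_0$.

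This will show that every double coset is $Sw_{k,l}P_{r,2n-r}$ for some admissible $(k,l)$, while distinctness of these double cosets for distinct pairs follows from $k(V),l(V)$ being $S$-invariants taking the prescribed values on each $w_{k,l}\cdot W_0$. The claimed ranges of $k$ and $l$ are then forced by standard dimension inequalities: $V\cap U_1\subseteq U_1$ together with $\dim\pi_2(V)=r-k\leq n$ yield $\max\{0,r-n\}\leq k\leq\min\{r,n\}$, while $A\cap B\subseteq A,B$ and $\dim(A+B)=r-l\leq n$ yield $\max\{0,r-n\}\leq l\leq\min\{k,r-k\}$. The principal technical hurdle is the first half of the normal-form step, i.e.\ proving that the unipotent part of $S$ acts transitively on the space of extension classes $\Hom(B,U_1/A)$; once this is settled the rest is elementary linear algebra. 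A subtlety worth flagging is that the invariant $l(V)$ only makes sense after fixing the canonical identification $U_1=U_2=F^n$ at the outset, since it involves comparing a subspace of $U_1$ with a subspace of $U_2$.
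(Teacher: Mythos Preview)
Your argument is correct. The invariants $k(V)$ and $l(V)$ are indeed $S$-invariant, the unipotent translation computation is valid (since $x\in\mathcal{M}_n(F)$ unrestricted makes $x|_B$ surject onto $\Hom(B,U_1)$, hence onto $\Hom(B,U_1/A)$), and the residual $\Delta GL_n$-action on split subspaces $A\oplus B$ reduces to the classical classification of pairs of subspaces of $F^n$ by $(\dim A,\dim B,\dim(A\cap B))$.

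Your route, however, differs from the paper's main proof. The paper does \emph{not} act by $S$ on the Grassmannian; instead it realises $GL_{2n}(F)/S$ as the set $X=\{(W,j):W\in Gr(n,V),\ j:W\xrightarrow{\sim}V/W\}$, and then classifies $GL(V)$-orbits on $X\times Gr(r,V)$ by the two invariants $\dim(W\cap W_1)$ and $\dim\bigl(j(W\cap W_1)\cap (W+W_1)/W_1\bigr)$. The sufficiency of these invariants is proved by building a basis of $V$ adapted simultaneously to $W$, $W_1$ and $j$, rather than by your two-step reduction. Your method exploits the semidirect product $S=\Delta GL_n(F)\ltimes N$ head-on: first the $N$-action kills the extension class, then $\Delta GL_n(F)$ normalises the pair $(A,B)$. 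This is more concrete and closer in spirit to the paper's ``alternate description'' (where the decomposition is achieved in two stages via $P\backslash GL_{2n}(F)/P_{r,2n-r}$ followed by $S\backslash P/J_{k,\alpha}$, the latter again reducing to $GL_n$-orbits on pairs of subspaces), though you collapse those two stages into a single clean orbit argument. The paper's coordinate-free approach via $X$ has the advantage that the invariant $l$ is defined intrinsically through $j$ without singling out the identification $U_1\cong U_2$ you flagged; your approach has the advantage of being shorter and of making the role of the unipotent radical of $S$ transparent.
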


\subsubsection{}The number of double coset representatives in Theorem \ref{mainintro} is given by the following result. For an integer $r,$ let $\lfloor \frac{r}{2} \rfloor$ denote the largest integer not exceeding $r/2.$

\begin{theorem}\label{numberintro}
	Let $n$ and  $r$ be integers such that $1\leq r<2n.$  Let $N(n,r)$ denote the cardinality of $S\backslash GL_{2n}(F)/P_{r,2n-r}.$
	Put $\alpha=\max\{0,r-n\}, \gamma=\min\{r,n\}$ and $\beta=\lfloor \frac{r}{2} \rfloor.$ Then,
	\begin{equation*}
		N(n,r)=\frac{(\beta-\alpha+1)(\beta-\alpha+2)+(\gamma-\beta)(\gamma-\beta+1)}{2}.
	\end{equation*}
\end{theorem}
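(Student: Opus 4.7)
The plan is to directly count the parameter pairs $(k,l)$ indexing the double coset representatives $w_{k,l}$ furnished by Theorem \ref{mainintro}. Since for each admissible $k$ the $l$-range is $\alpha \leq l \leq \min\{k, r-k\}$, one has
\begin{equation*}
N(n,r) = \sum_{k=\alpha}^{\gamma}\bigl(\min\{k, r-k\} - \alpha + 1\bigr),
\end{equation*}
and I would evaluate this finite sum by splitting the range at the pivot $k = \beta = \lfloor r/2 \rfloor$, which is exactly the cutoff separating the two branches of $\min\{k, r-k\}$.

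First I would verify the inequalities $\alpha \leq \beta \leq \gamma$ so that both sub-ranges sit inside $[\alpha,\gamma]$ meaningfully; this is a short case check using $r<2n$, separating the regimes $r \leq n$ (where $\alpha = 0$, $\gamma = r$) and $r > n$ (where $\alpha = r-n$, $\gamma = n$). For $\alpha \leq k \leq \beta$, one has $\min\{k, r-k\} = k$, and reindexing via $j = k - \alpha$ gives
\begin{equation*}
\sum_{k=\alpha}^{\beta}(k - \alpha + 1) = \sum_{j=0}^{\beta-\alpha}(j+1) = \frac{(\beta-\alpha+1)(\beta-\alpha+2)}{2},
\end{equation*}
which recovers the first term in the claimed formula.

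For $\beta + 1 \leq k \leq \gamma$, one has $\min\{k, r-k\} = r - k$, and the key identity I would establish is $r - \beta - \alpha = \gamma - \beta$. This again reduces to a two-line case analysis: if $r \leq n$ then $\alpha = 0$, $\gamma = r$ and both sides equal $\lceil r/2 \rceil$; if $r > n$ then $\alpha = r - n$, $\gamma = n$ and both sides equal $n - \beta$. With this identity in hand, the substitution $j = r - k - \alpha + 1$ turns the second partial sum into $\sum_{j=1}^{\gamma-\beta} j = \frac{(\gamma-\beta)(\gamma-\beta+1)}{2}$, and summing the two contributions yields the formula for $N(n,r)$. I do not expect any serious obstacle; the hardest point is merely confirming the identity $r - \beta - \alpha = \gamma - \beta$ (and thus that the second sum telescopes cleanly to a triangular number), which is dispatched by the mild case split above.
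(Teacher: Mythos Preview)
Your proposal is correct and follows essentially the same route as the paper: both arguments write $N(n,r)=\sum_{k=\alpha}^{\gamma}(\min\{k,r-k\}-\alpha+1)$, split the sum at $\beta=\lfloor r/2\rfloor$, and evaluate the two resulting triangular-number sums. The only cosmetic difference is that the paper carries out the case split $r\leq n$ versus $r>n$ explicitly before summing, whereas you keep everything in terms of $\alpha,\beta,\gamma$ and invoke the identity $r-\alpha=\gamma$ (equivalently $r-\beta-\alpha=\gamma-\beta$) once; the computations are otherwise identical.
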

\subsubsection{}
We now present a Bruhat decomposition for the double coset space $S\backslash GL_{2n}(F)/P_{r,2n-r}.$ Let $S_{n}$ denote the permutation group of  $\{1,\dots, n\}.$ Let $\Delta S_n$ be the subgroup of $S_{2n}$ consisting of those permutations $\sigma$ such that $1\leq \sigma(j)\leq n$  and $\sigma(n+j)=n+\sigma(j)$ for  $1\leq j \leq n.$ Let $S_r$  and $S_{2n-r}$ be regarded as subgroups of $S_{2n}$ which permutes $\{1,\dots r\}$ and  $\{r+1,\dots, 2n\}$ respectively.  Let $w_{k,l}'\in S_{2n}$ be defined as follows: $w_{k,l}'$ maps $j\mapsto j$ for $1\leq j \leq k$ and $n+r-l+1\leq j\leq 2n,$ $k+j \mapsto n+j$ for $1\leq j\leq l,$ $k+l+j\mapsto n+k+j$ for $1\leq j \leq r-(k+l),$  $r+j\mapsto k+j$ for $1\leq j\leq n-k$ and $n+r-k+j\mapsto n+k+j$ for $1\leq j\leq k-l.$  If we regard $S_{2n}$ as a subgroup of $GL_{2n}(F)$  as permutation matrices, the permutation $w_{k,l}'$ corresponds to the matrix $w_{k,l}.$ We prove the following theorem.
\begin{theorem}\label{doublecosets-symmetric-intro}
	Let $n$ and  $r$ be integers such that $1\leq r<2n.$  For integers $k$ and $l$ such that $\max\{0,r-n\} \leq k \leq \min\{r,n\}$ and $\max\{0,r-n\}\leq l \leq \min\{k,r-k\}$	define $w_{k,l}\in GL_{2n}(F)$ by \eqref{definitionwklintro}.
	Then, $w_{k,l}\in S_{2n}$  and
	\begin{equation*}\label{doublecosetwkl-symmetric}
		\left\{w_{k,l}: \max\{0,r-n\}\leq k \leq \min\{n,r\}, \max\{0,r-n\}\leq l \leq \min\{k,r-k\} \right\}
	\end{equation*} is a complete set of $(\Delta S_n ,S_{r}\times S_{2n-r})$-double coset representatives in $S_{2n}.$ In particular, we have a bijection 
	\begin{equation}\label{Bruhat}
		S\backslash GL_{2n}(F) /P_{r,2n-r} \overset{1:1}{\longleftrightarrow}\Delta S_n\backslash S_{2n}/S_r \times S_{2n-r}.
	\end{equation}
\end{theorem}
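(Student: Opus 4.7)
The plan is to prove the theorem in three stages: verify that each $w_{k,l}$ lies in $S_{2n}$, classify the $(\Delta S_n, S_r \times S_{2n-r})$-double cosets of $S_{2n}$ via a combinatorial orbit analysis, and then derive the bijection \eqref{Bruhat} by comparing with Theorem \ref{mainintro}.

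First, I would check that $w_{k,l}$ is a permutation matrix: its six identity blocks occupy non-overlapping block rows of sizes $k, n-k, l, k-l, r-(k+l), n-r+l$ and non-overlapping block columns of sizes $k, l, r-(k+l), n-k, k-l, n-r+l$, each collection summing to $2n$. Hence every row and column of $w_{k,l}$ carries exactly one $1$, so $w_{k,l} \in S_{2n}$.

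Second, I would identify $S_{2n}/(S_r \times S_{2n-r})$ with the collection of $r$-element subsets of $\{1,\ldots,2n\}$ via $\sigma(S_r \times S_{2n-r}) \mapsto \sigma(\{1,\ldots,r\})$, so that $(\Delta S_n, S_r\times S_{2n-r})$-double cosets in $S_{2n}$ are in bijection with $\Delta S_n$-orbits on such subsets. Splitting $\{1,\ldots,2n\} = \{1,\ldots,n\} \sqcup \{n+1,\ldots,2n\}$, I would encode an $r$-subset $A$ as a pair $(A_1, B_2)$ of subsets of $\{1,\ldots,n\}$ with $|A_1|+|B_2| = r$, where $A_1 = A \cap \{1,\ldots,n\}$ and $B_2 = \{j-n : j \in A \cap \{n+1,\ldots,2n\}\}$. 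Under this encoding $\Delta S_n \cong S_n$ acts diagonally, and the orbit of $(A_1, B_2)$ is determined by the four cell-sizes of the partition of $\{1,\ldots,n\}$ into $A_1 \cap B_2$, $A_1 \setminus B_2$, $B_2 \setminus A_1$, and $\{1,\ldots,n\} \setminus (A_1 \cup B_2)$; these sizes are fixed by $k := |A_1|$ and $l := |A_1 \cap B_2|$ together with $r$ and $n$. The requirement that all four cell sizes be non-negative translates precisely to
\[
\max(0, r-n) \leq k \leq \min(n, r), \qquad \max(0, r-n) \leq l \leq \min(k, r-k),
\]
matching the ranges in \eqref{definitionwklintro}. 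A direct reading of the block structure of $w_{k,l}$ gives $w_{k,l}(\{1,\ldots,r\}) = \{1,\ldots,k\} \cup \{n+1,\ldots,n+l\} \cup \{n+k+1,\ldots,n+r-l\}$, whose encoded pair $(A_1, B_2) = (\{1,\ldots,k\},\ \{1,\ldots,l\} \cup \{k+1,\ldots,r-l\})$ realizes the invariants $(k, l)$. Hence $\{w_{k,l}\}$ is a complete, non-redundant set of $(\Delta S_n, S_r \times S_{2n-r})$-double coset representatives in $S_{2n}$.

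Finally, since $\Delta S_n \subseteq S$ and $S_r \times S_{2n-r} \subseteq P_{r,2n-r}$, the inclusion $S_{2n} \hookrightarrow GL_{2n}(F)$ induces a well-defined map on double coset spaces sending the class of $w_{k,l}$ on the left to its class on the right. By the second stage and Theorem \ref{mainintro}, both double coset spaces are parameterized bijectively by the same family $\{w_{k,l}\}$, so this map is a bijection, establishing \eqref{Bruhat}. I expect the main obstacle to be the orbit analysis in the second stage: setting up the encoding $A \leftrightarrow (A_1, B_2)$ and verifying that the combinatorial invariants $(k, l)$ and their ranges match exactly those in \eqref{definitionwklintro}. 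This is bookkeeping-heavy but conceptually straightforward once the diagonal $S_n$-action on pairs of subsets of $\{1,\ldots,n\}$ is identified as the key structure.
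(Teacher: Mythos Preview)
Your proposal is correct and follows essentially the same route as the paper: identify $\Delta S_n\backslash S_{2n}/(S_r\times S_{2n-r})$ with $\Delta S_n$-orbits on $r$-element subsets of $\{1,\dots,2n\}$, classify these orbits by the pair of invariants $(k,l)$, and then check that $w_{k,l}$ realizes each orbit, invoking Theorem~\ref{mainintro} for the final bijection. Your encoding of an $r$-subset as a pair $(A_1,B_2)\subseteq\{1,\dots,n\}^2$ with the diagonal $S_n$-action is a clean repackaging of the paper's invariant $|\eta(A\cap\mathcal J_n)\cap A|$, and your appeal to the standard fact that such orbits are determined by the four cell sizes replaces the paper's explicit Proposition~\ref{symmetricmain}; otherwise the arguments coincide.
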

\smallskip

\section{Preliminaries}\label{NotationsPrelims}
\subsection{Notations}\label{Notations}
\subsubsection{}
We shall collect some notations and terminology (see  \cite[\S 7.1]{Garret}) in this section which we shall be using throughout this article. Let $F$ be any field. We recall that $\mathcal{M}_{m,n}(F)$ denotes the $F$-vector space of all $m \times n$ matrices over $F$ and $ GL_{n}(F)$ denotes the group of all invertible matrices of order $n$ over $F.$  Also,  $\mathcal{M}_{n,n}(F)$ will be denoted in short  by $\mathcal{M}_n(F)$ and $I_n$ will denote the identity matrix of order $n.$ 
\subsubsection{}
Let $V$ be a  vector space over $F$ with dimension $n.$  For an integer $k$ such that $1\leq k <n,$ the set of all $k$ dimensional subspaces of $V$  shall be denoted by $Gr(k,V).$  Define a flag $\mathcal{F}$ in $V$ to be a strictly increasing sequence of subspaces  $V_0 \subset \cdots \subset V_m = V.$  
Let $GL(V)$ denote the group consisting of all  invertible linear maps on $V.$ The subgroup of  $GL(V)$ which stabilizes a flag $\mathcal{F}$ is called a parabolic subgroup of $GL(V)$ associated to the flag $\mathcal{F}.$ In particular, the group $GL(V)$ acts transitively on  $Gr(k,V).$ A maximal parabolic subgroup of $GL(V)$ is the stabilizer of a flag  $W\subset V$ where $W\in Gr(k,V).$ 

\subsubsection{}
 Let $F^{n}$ denote the $n$-dimensional vector space over $F$  with its standard basis $\{e_1,  \ldots , e_{n}\}.$ The group $GL_{n}(F)$ acts transitively on $Gr(r,F^n)$ and the stabilizer in $ GL_{n}(F)$ of $\langle e_1, \ldots , e_r \rangle $  is called the  maximal standard parabolic subgroup  associated to the partition $(r,n-r)$ of $n,$ denoted by $P_{r,n-r}.$  Recall from \eqref{maxPr} that
  \begin{equation*}
	P_{r,n-r} = \left\{ \begin{pmatrix}  g_1 & x\\ 0 & g_2 \end{pmatrix} : g_1 \in GL_r(F) , g_2 \in GL_{n-r}(F), x \in \mathcal{M}_{r,n-r}(F) \right\}.
\end{equation*}
Let \begin{equation}\label{maxMr}
M_{r,n-r} = \left\{ \begin{pmatrix}  g_1 & 0\\ 0 & g_2 \end{pmatrix} : g_1 \in GL_r(F) , g_2 \in GL_{n-r}(F) \right\}
\end{equation}

and

\begin{equation}\label{maxNr}
	N_{r,n-r} = \left\{ \begin{pmatrix}  I_r & x\\ 0 & I_{n-r} \end{pmatrix} :  x \in M_
{r,n-r}(F) \right\}.
\end{equation}

\subsubsection{} We have the Levi decomposition $P_{r,n-r} = M_{r,n-r} N_{r,n-r},$ where   $M_{r,n-r} $ and $N_{r,n-r}$ are respectively called the Levi subgroup and  the unipotent radical of $P_{r,n-r}.$   We shall denote the maximal parabolic subgroup $P_{n,n}$ of $GL_{2n}(F)$  by $P,$ its Levi subgroup $M_{n,n}$ by $M$ and its unipotent radical $N_{n,n}$ by $N.$  Put
\begin{equation}\label{definitionMpsi}
	\Delta GL_n(F) = \left\{ \begin{pmatrix}
		g & 0\\ 0 & g 
	\end{pmatrix} : g \in GL_n(F)\right\}.	
\end{equation}  
One has  a semidirect product $S=\Delta GL_n(F)\ltimes N.$

\section{Double cosets}\label{mainresults}
The aim of this section is to prove Theorem \ref{mainintro} in a coordinate free approach which is  achieved in  Theorem \ref{PSpsidoublecosets}.  Throughout this section, we fix $V$ to be a $2n$-dimensional vector space over $F$. 
\subsection{Few actions}
\subsubsection{} 
We begin with the following general lemma.
\begin{lemma}\label{Goribts-doublecosets}
	Let G be a group and $H,K$ be subgroups of G. The double coset space $H\backslash G/K$ is in bijective correspondence with the $G$-orbits of $G/H \times G/K$ under the natural action of $G$ on $G/H\times G/K.$
\end{lemma}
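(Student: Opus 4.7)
The plan is to set up a natural map between the two sides and then verify it is a bijection by reducing each orbit to a canonical representative. The natural map sends the $G$-orbit of a pair $(aH, bK) \in G/H \times G/K$ to the double coset $Ha^{-1}bK$. First I would check that this is well-defined: changing the coset representatives $(a,b)$ to $(ah, bk)$ with $h \in H$, $k \in K$ gives $(ah)^{-1}(bk) = h^{-1}(a^{-1}b)k \in H a^{-1} b K$, and replacing $(aH, bK)$ by $(gaH, gbK)$ gives $(ga)^{-1}(gb) = a^{-1}b$, which of course lies in the same double coset. So the assignment descends to a well-defined map from $G$-orbits on $G/H \times G/K$ to $H \backslash G / K$.

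For surjectivity I would simply observe that the double coset $HgK$ is hit by the orbit of $(eH, gK)$. For injectivity, the cleanest route is to first normalize every orbit. Given any orbit containing $(aH, bK)$, translating by $a^{-1}$ shows the orbit also contains $(eH, a^{-1}bK)$; hence every $G$-orbit has a representative of the form $(eH, xK)$ for some $x \in G$. Two such normalized representatives $(eH, xK)$ and $(eH, yK)$ lie in the same $G$-orbit if and only if there is $g \in G$ with $gH = H$ and $gxK = yK$; the first condition forces $g \in H$ and then the second forces $g \in yKx^{-1}$, so equivalence occurs precisely when $H \cap yKx^{-1} \neq \emptyset$, i.e., when $y \in HxK$. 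Thus the normalized parameter $x$ is well-defined up to the equivalence $x \sim y \iff HxK = HyK$, which is exactly the equivalence defining $H \backslash G / K$.

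Putting these together, the assignment $(aH, bK) \mapsto Ha^{-1}bK$ induces the desired bijection between $G \backslash (G/H \times G/K)$ and $H \backslash G / K$. I do not anticipate any real obstacle; the only point requiring a little care is the bookkeeping in the injectivity argument, which is why I would first normalize to representatives of the form $(eH, xK)$ before comparing orbits. This normalization step makes the correspondence essentially tautological.
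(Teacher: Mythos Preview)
Your proof is correct and essentially identical to the paper's approach: the paper simply states that the map $HgK \mapsto {\rm Orb}_G(H, gK)$ is a well-defined bijection and omits the details, while you supply exactly those details for the inverse map. Your normalization to representatives of the form $(eH, xK)$ is precisely what makes the paper's stated map the evident inverse.
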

\begin{proof}
In fact, the map $HgK \mapsto {\rm{Orb}}_{G}(H,gK)$  is well defined and is a bijection. We omit the details of the proof.
\end{proof}

\subsubsection{}
For a subspace $W'\in Gr(r,V),$  denote the stabilizer in $GL(V)$ of $W'$  by $P_{W'}.$ Then, $P_{W'}:=\{f \in GL(V): f(W')=W'\}$  is a maximal parabolic subgroup of $GL(V).$  We may identify $GL(V)/P_{W'}$ with $Gr(r,V)$ via $gP_{W'}\leftrightarrow g(W').$ 

\subsubsection{}
We now introduce a space $X$ which holds the key to establishing Theorem \ref{mainintro}. Define
\begin{equation}
	X:=\left\{(W,j): W \in Gr(n,V), j:W \to V/W \:\text{is an isomorphism}\right\}.
\end{equation}
Define an action of  $GL(V)$ on $X$ by
\begin{equation}\label{GLactionforSpsi}
g\cdot(W,j)=(g(W),\bar g\circ j\circ g^{-1}), 
\end{equation}
for $g \in GL(V)$ and $(W,j) \in X,$ where $\bar{g}$ is the map induced by $g$ from $V/W$ to $V/g(W).$  It is straight forward to verify that \eqref{GLactionforSpsi} is indeed an action. We shall next show that this action is transitive.

\begin{lemma}\label{gmodSpsi}
The action of $GL(V)$ on $X$ given by \eqref{GLactionforSpsi} is transitive.

\end{lemma}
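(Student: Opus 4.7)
The plan is to prove transitivity by a two-step reduction: first use that $GL(V)$ acts transitively on $Gr(n,V)$ to bring the first coordinates together, then show that the stabilizer of a fixed $n$-dimensional subspace acts transitively on the fiber of isomorphisms $W \to V/W$. Given $(W_1,j_1),(W_2,j_2)\in X$, since $\dim W_1=\dim W_2=n$, transitivity on the Grassmannian yields some $g_0\in GL(V)$ with $g_0(W_1)=W_2$. Replacing $(W_1,j_1)$ by $g_0\cdot(W_1,j_1)=(W_2,\overline{g_0}\circ j_1\circ g_0^{-1})$, we reduce to the situation where both pairs live over the same subspace $W:=W_2$, with isomorphisms $j_1',j_2: W\to V/W$.

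The remaining task is to produce $h\in P_W$ (the stabilizer of $W$) such that $\overline{h}\circ j_1'\circ h^{-1}|_W=j_2$, or equivalently
\begin{equation*}
\overline{h}=j_2\circ (h|_W)\circ (j_1')^{-1}.
\end{equation*}
This is an equation relating the two data that together determine an element of $P_W$, namely $h|_W\in GL(W)$ and the induced map $\overline{h}\in GL(V/W)$. Making the simplest choice $h|_W=\mathrm{id}_W$ forces $\overline{h}=j_2\circ(j_1')^{-1}$, which is automatically an element of $GL(V/W)$ since $j_1',j_2$ are isomorphisms.

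To realize this prescribed pair $(\mathrm{id}_W,\,j_2\circ(j_1')^{-1})$ by an actual $h\in GL(V)$ preserving $W$, one picks any complement $W''$ of $W$ in $V$, so $V=W\oplus W''$ and the quotient map identifies $W''\xrightarrow{\sim}V/W$. Under this identification, define $h$ to act as the identity on $W$ and as the transport of $j_2\circ(j_1')^{-1}$ to $W''$, and extend linearly; then $h\in P_W$ by construction and its induced map on $V/W$ is exactly $j_2\circ(j_1')^{-1}$. Composing with the earlier $g_0$ produces an element of $GL(V)$ sending $(W_1,j_1)$ to $(W_2,j_2)$.

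The only point requiring any care is the bookkeeping in the second step, where one must verify that the constructed $h$ really induces the desired automorphism of $V/W$; this is immediate from the direct sum decomposition $V=W\oplus W''$. Otherwise the argument is formal: transitivity on $Gr(n,V)$ plus the fact that the Levi quotient of $P_W$ surjects onto $GL(W)\times GL(V/W)$ together give the result, and I do not anticipate any serious obstacle.
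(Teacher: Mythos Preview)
Your proof is correct, but it is organized differently from the paper's. The paper proceeds in one stroke: given $(W,j)$ and $(W',j')$, it chooses a basis $\{v_1,\dots,v_n\}$ of $W$, lifts $j(v_i)$ to $v_{n+i}\in V$ so that $\{v_1,\dots,v_{2n}\}$ is a basis of $V$, does the same for $(W',j')$, and defines $g$ by sending one basis to the other. Your argument instead factors the problem: first use transitivity on $Gr(n,V)$ to match the subspaces, then exploit the Levi quotient $P_W\twoheadrightarrow GL(W)\times GL(V/W)$ (realized via a choice of complement) to adjust the isomorphism on the fiber. The paper's version is more explicit and sets up exactly the kind of adapted bases that recur later in the proof of Theorem~\ref{PSpsidoublecosets}; your version is more structural and makes transparent \emph{why} transitivity holds, namely that the stabilizer of $W$ already acts transitively on the set of isomorphisms $W\to V/W$. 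Both are short and valid.
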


\begin{proof}
	Let $(W,j),(W',j')\in X.$
 Choose a basis $\{v_1,\dots,v_n\}$ of $W.$ For $1\leq i \leq n,$ choose  $v_{n+i}\in V$ such that $j(v_i)=v_{n+i}+W$ so that $\{v_{n+1}+W,\dots,v_{2n}+W\}$ is a basis of $V/W.$ Similarly, choose a basis $\{v_1',\dots,v_n'\}$ of $W'$ and choose $\{v_{n+i}':1\leq i \leq n \}\subset V$ such that  $j'(v_i')=v_{n+i}'+W'$ for $1\leq i \leq n,$ so that $\{v_{n+1}'+W',\dots,v_{2n}'+W'\}$ is a basis of $V/W'.$
 It follows that both $\{v_i:1\leq i\leq 2n\}$ and $\{v_i':1\leq i\leq 2n\}$ are bases of $V.$ Define $g:V\to V$ by setting $g(v_i)=v_i'$ for $1\leq i \leq 2n.$  Clearly, $g\in GL(V),g(W)=W'$ and for $1 \leq i \leq n,$ we have $\bar g\circ j\circ g^{-1}(v_i')=\bar g\circ j(v_i)=\bar g(v_{n+i}+W)=v_{n+i}'+W'=j'(v_i').$ This completes the proof.
\end{proof}

\subsubsection{}
Fix any $(W,j)\in X.$ Put
\begin{equation}
S_{(W,j)}=	\{g\in P_W: \bar g\circ j=j\circ g\}.
\end{equation}
Then,  $S_{(W,j)}$ is the stabilizer of $(W,j)$  in $GL(V).$  By Lemma \ref{gmodSpsi}, we can identify $GL(V)/S_{(W,j)}$ with $X.$  In view of Lemma  \ref{Goribts-doublecosets}, to obtain a complete set of double coset representatives for $S_{(W,j)}\backslash GL(V) /P_{W'},$ it is sufficient to consider the orbits under the action of $GL(V)$ on $X\times Gr(r,V)$ given by 
\begin{equation}\label{GLaction-product}
	g\cdot ((W,j),W')=((g(W),\bar g\circ j\circ g^{-1}),g(W')),
\end{equation} 
for $g\in GL(V), ((W,j),W') \in X\times Gr(r,V).$ 

\subsection{Main results}
We shall establish the coordinate free version of the main result Theorem \ref{mainintro} of our article in this subsection. 
\subsubsection{}
We need the following lemma provides a necessary condition for a pair of members in $X\times Gr(r,V)$ to be in the same $GL(V)$-orbit under the action \eqref{GLactionforSpsi}.

\begin{lemma}\label{dimensionspreserved}
	Suppose $((W_1,j),W), ((W_1',j'),W')\in X\times Gr(r,V) $ and $g\in GL(V)$ are such that
	$g\cdot ((W_1,j),W)=((W_1',j'),W').$ Then the following hold:
	\begin{enumerate}
		\item $\dim(W\cap W_1)=\dim(W'\cap W_1'),$ and
		\item $\dim(j(W\cap W_1)\cap (W+W_1)/W_1)= \dim(j'(W'\cap W_1')\cap (W'+W_1')/W_1').$
	\end{enumerate}  
\end{lemma}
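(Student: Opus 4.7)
My plan is to verify both (1) and (2) by directly unpacking the action \eqref{GLaction-product}: writing $g\cdot ((W_1,j),W)=((W_1',j'),W')$ forces $W_1'=g(W_1),$ $W'=g(W),$ and $j'=\bar g\circ j\circ g^{-1},$ where $\bar g\colon V/W_1\to V/W_1'$ denotes the linear isomorphism on quotients induced by $g$ (well-defined precisely because $g$ sends $W_1$ onto $W_1'$). Once this is set up, both assertions should reduce to the observation that linear isomorphisms preserve sums, intersections, and therefore dimensions.

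For part (1), I would simply invoke the fact that the bijection $g\colon V\to V$ preserves intersections of subspaces, giving $W'\cap W_1'=g(W)\cap g(W_1)=g(W\cap W_1),$ which has the same dimension as $W\cap W_1.$

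For part (2), I would compute each of the two subspaces in the target intersection separately. On one hand,
\[
j'(W'\cap W_1')=(\bar g\circ j\circ g^{-1})\bigl(g(W\cap W_1)\bigr)=\bar g\bigl(j(W\cap W_1)\bigr),
\]
and on the other, $(W'+W_1')/W_1'=g(W+W_1)/g(W_1)=\bar g\bigl((W+W_1)/W_1\bigr).$ Since $\bar g$ is an isomorphism from $V/W_1$ onto $V/W_1',$ images under $\bar g$ commute with intersections, which yields
\[
j'(W'\cap W_1')\cap (W'+W_1')/W_1'=\bar g\Bigl(j(W\cap W_1)\cap (W+W_1)/W_1\Bigr),
\]
and the equality of dimensions follows immediately.

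The whole argument is essentially bookkeeping, and I do not anticipate a genuine obstacle. The only small point requiring a moment of care is verifying that $\bar g$ sends $(W+W_1)/W_1$ precisely \emph{onto} $(W'+W_1')/W_1',$ rather than only into it; this is automatic from the identities $g(W_1)=W_1'$ and $g(W)=W'$ together with $g$ being a bijection.
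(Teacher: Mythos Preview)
Your argument is correct and follows essentially the same route as the paper: both proofs use that $g(W\cap W_1)=W'\cap W_1'$ for (1), and for (2) both exploit that $\bar g$ is an isomorphism carrying the relevant subspaces of $V/W_1$ to those of $V/W_1'$. The only minor difference is presentational: the paper verifies one inclusion $\bar g\bigl(j(W\cap W_1)\cap (W+W_1)/W_1\bigr)\subset j'(W'\cap W_1')\cap (W'+W_1')/W_1'$ by an element chase and then swaps $g$ with $g^{-1}$ to obtain the reverse inequality, whereas you identify each factor of the target intersection as the $\bar g$-image of the corresponding source factor and conclude equality in one step.
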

\begin{proof}
Since $g(W)=W', g(W_1)=W_1'$ and  $g$ is injective,  $g(W\cap W_1)=W'\cap W_1',$ proving (1).  To prove (2), we note that it is sufficient to prove the following claim:
\begin{equation}\label{midlemma}
\bar g(j(W\cap W_1)\cap (W+W_1)/W_1)\subset j'(W'\cap W_1')\cap (W'+W_1')/W_1'.
\end{equation}
	
Accepting \eqref{midlemma} and noting that $\bar{g}:V/W_1 \to V/W_1'$ is an isomorphism, it follows that $\dim(j(W\cap W_1)\cap (W+W_1)/W_1)\leq \dim(j(W'\cap W_1')\cap (W'+W_1')/W_1').$ Swapping $g,W,W_1,j$ with $g^{-1}, W', W_1',j'$ gives the inequality in the reverse direction proving (2).

To prove \eqref{midlemma}, let $v+W_1 \in j(W\cap W_1)\cap (W+W_1)/W_1.$ Suppose $w\in W\cap W_1$ is such that $v+W_1=j(w),$ then $\bar g(v+W_1)=(\bar g\circ j)(w)=(j'\circ g)(w).$ But, $g(w)\in W'\cap W_1'$ and it gives $\bar g(v+W_1) \in j'(W'\cap W_1').$ Also, $\bar g (v+W_1)=g(v)+W_1' \in (W'+W_1')/W_1'.$ This proves \eqref{midlemma}.\end{proof}

\subsubsection{}
We are now ready to state and prove  the main theorem of this section which establishes the coordinate free version of Theorem \ref{mainintro}.

\begin{theorem}\label{PSpsidoublecosets}
Given an element $((W_1,j),W)\in X\times Gr(r,V),$ its orbit under the action of $GL(V)$  given by \eqref{GLaction-product} is determined by 
\begin{enumerate}
\item $\dim(W\cap W_1),$ and
\item $\dim(j(W\cap W_1)\cap (W+W_1)/W_1).$
\end{enumerate} 
\end{theorem}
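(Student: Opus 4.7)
The plan is to establish the converse of Lemma \ref{dimensionspreserved}, namely that the pair $(k,l):=(\dim(W\cap W_1),\dim(j(W\cap W_1)\cap (W+W_1)/W_1))$ is a complete invariant for the $GL(V)$-orbits on $X\times Gr(r,V)$ under the action \eqref{GLaction-product}. Using Lemma \ref{gmodSpsi} to first transport the $X$-component to a common value, the task reduces to showing that if $((W_1,j),W)$ and $((W_1,j),W')$ share the same invariants $(k,l)$, then some $g\in S_{(W_1,j)}$ satisfies $g(W)=W'$. The strategy is to build, for any such triple, an ordered basis of $V$ in which the triple takes a standard form depending only on $n,r,k,l$; the map identifying two such adapted bases will then furnish the desired $g$.

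To construct the adapted basis for $((W_1,j),W)$, we proceed in stages. Pick a basis $\{e_1,\ldots,e_l\}$ of $j^{-1}(j(W\cap W_1)\cap (W+W_1)/W_1)\subset W\cap W_1$, extend it to a basis $\{e_1,\ldots,e_k\}$ of $W\cap W_1$, and further to a basis $\{e_1,\ldots,e_n\}$ of $W_1$. For each $1\leq i\leq n$, let $e_{n+i}\in V$ be a lift of $j(e_i)\in V/W_1$, chosen so that $e_{n+i}\in W$ whenever $1\leq i\leq l$; this is possible because $j(e_i)\in (W+W_1)/W_1$ for those indices by the choice of $\{e_1,\ldots,e_l\}$. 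Finally, using that $(W+W_1)/W_1$ has dimension $r-k$ and meets $j(W\cap W_1)$ in the $l$-dimensional subspace $\langle j(e_1),\ldots,j(e_l)\rangle$, a careful modification of $\{e_{k+1},\ldots,e_n\}$ (combining translations by elements of $W\cap W_1$ with a linear change of basis in the complement of $W\cap W_1$ inside $W_1$), together with the induced change of the lifts $\{e_{n+k+1},\ldots,e_{2n}\}$ through $j$, arranges that $(W+W_1)/W_1=\langle e_{n+1}+W_1,\ldots,e_{n+l}+W_1,e_{n+k+1}+W_1,\ldots,e_{n+r-l}+W_1\rangle$; the lifts $e_{n+k+1},\ldots,e_{n+r-l}$ are then re-chosen so as to lie in $W$. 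A dimension count $k+l+(r-k-l)=r=\dim W$ forces $W=\langle e_1,\ldots,e_k,\,e_{n+1},\ldots,e_{n+l},\,e_{n+k+1},\ldots,e_{n+r-l}\rangle$, which is the promised standard form.

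Applying the same construction to $W'$ yields a second adapted basis $\{e_1',\ldots,e_{2n}'\}$ presenting $((W_1,j),W')$ in the identical standard form. The linear map $g\in GL(V)$ determined by $g(e_i)=e_i'$ then preserves $W_1$ and the map $j$ by the standard presentation, hence lies in $S_{(W_1,j)}$, and it sends $W$ to $W'$. The main technical obstacle is the basis-adjustment step: one must verify that the combined freedom of translating $\{e_{k+1},\ldots,e_n\}$ by elements of $W\cap W_1$ and performing a $GL_{n-k}$ change of basis is enough to position $(W+W_1)/W_1$ correctly relative to $j(W\cap W_1)$ inside $V/W_1$. This reduces to a transversality argument in the quotient $V/W_1/\langle j(e_1),\ldots,j(e_l)\rangle$, where the image of $(W+W_1)/W_1$ has dimension $r-k-l$ and meets the image of $\langle j(e_{l+1}),\ldots,j(e_k)\rangle$ trivially; the requisite complement exists precisely because $l\geq r-n$, which is ensured by the dimension constraints inherent in the invariants.
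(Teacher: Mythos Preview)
Your argument is correct and follows the same overall strategy as the paper---construct an ordered basis of $V$ putting $((W_1,j),W)$ into a standard form depending only on $(k,l)$---but the order of construction is reversed. You build from $W_1$ outward (choosing $e_1,\ldots,e_n$ first, then pushing forward via $j$), which forces the ``careful modification'' step to realign $(W+W_1)/W_1$. The paper instead works from the quotient inward: it first picks $v_{n+1},\ldots,v_{n+l}\in W$ whose cosets form a basis of $j(W\cap W_1)\cap (W+W_1)/W_1$, extends to a basis of $(W+W_1)/W_1$ with lifts still chosen in $W$, and only then pulls back via $j^{-1}$ to get the vectors $v_1,\ldots,v_l$ and $v_{k+1},\ldots,v_{r-l}$ in $W_1$. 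With that ordering the one nontrivial point---that $\langle v_1,\ldots,v_k\rangle\cap\langle v_{k+1},\ldots,v_{r-l}\rangle=0$ in $W_1$---is checked directly in two lines, and no post-hoc modification is needed. Your preliminary reduction via Lemma~\ref{gmodSpsi} to a common $X$-component is harmless but unnecessary; the paper handles both triples at once by the same basis construction. One small point: the inequality $l\geq r-n$ you invoke at the end is not an additional input but a consequence of the transversality you have just established, since the two images in $V/W_1/\langle j(e_1),\ldots,j(e_l)\rangle$ being disjoint forces their dimensions to sum to at most $n-l$.
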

\begin{proof}
Suppose $((W_1,j),W), ((W_1',j'),W') \in X\times Gr(r,V) $ are such that 
$\dim(W\cap W_1)=k=\dim(W'\cap W_1')$ and $\dim(j(W\cap W_1)\cap (W+W_1)/W_1)=l= \dim(j'(W'\cap W_1')\cap (W'+W_1')/W_1').$ To prove the theorem,  by Lemma \ref{dimensionspreserved}, it is sufficient to show that  there exists a $g \in GL(V)$ such that $g\cdot ((W_1,j),W)=((W_1',j'),W').$ The strategy of our proof is to construct a basis $\{v_1,\dots,v_{2n}\}$ of $V$ such that the following properties hold:
	\begin{enumerate}
		\item[(a)] $\{v_1,\dots,v_n\}$ is a basis of $W_1$ and  $\{v_{n+1}+W_1,\dots,v_{2n}+W_1\}$ is a basis of $V/W_1,$
		\item[(b)] $\{v_1,\dots,v_k\}\cup \{v_{n+1},\dots,v_{n+l}\}\cup\{v_{(n+k)+1},\dots, v_{(n+k)+r-k-l}\}$ is a basis of $W,$ and
		\item[(c)]  for $1\leq i \leq n,$  $j(v_i)=v_{n+i}+W_1.$
	\end{enumerate}
Granting such a basis exists as above, we can construct in a similar way, a basis $\{v_1',\dots,v_{2n}'\}$ of $V$ such that:
	\begin{enumerate}
		\item[(a)']$\{v_1',\dots,v_n'\}$ is a basis of $W_1'$ and  $\{v_{n+1}'+W_1',\dots,v_{2n}'+W_1'\}$ is a basis of $V/W_1',$
		\item[(b)'] $\{v_1',\dots,v_k'\}\cup \{v_{n+1}',\dots,v_{n+l}'\}\cup\{v_{(n+k)+1}',\dots, v_{(n+k)+r-k-l}'\}$ is a basis of $W',$ and
		\item[(c)']  for $1\leq i \leq n,$  $j'(v_i')=v_{n+i}'+W_1'.$
	\end{enumerate}
	Given such bases exist, we can define $g:V \to V$ by $g(v_i)=v_i'$ for  $1\leq i\leq 2n.$ Then, $g\in GL(V),$ $g(W)=W'$ and $g(W_1)=W_1'.$ Also, for $1\leq i\leq n,$ one has $\bar g\circ j\circ g^{-1}(v_i')= \bar g\circ j(v_i)=\bar g(v_{n+i}+W_1)=v_{n+i}'+W_1' = j'(v_i'),$ proving $g\cdot ((W_1,j),W)=((W_1',j'),W').$   The proof of the theorem is thus reduced to constructing a basis of $V$ satisfying (a), (b) and (c). 
	
	For the remaining part of this proof, we shall denote a coset $v+W_1$  by $\bar{v}.$ We refer the reader to the diagram below to keep track of the various vector spaces involved in the proof.

\begin{center}
	\begin{tikzpicture}[scale=1.9, transform shape]
	\node[node, label={left:$\scriptscriptstyle{W\cap W_1}$}] (V1) at (-2,0) {};
	\node[node, label={left:$\scriptscriptstyle{W}$}] (V2) at (-1,1) {};
	\node[node, label={left:$\scriptscriptstyle{W_1}$}] (V3) at (-1,-1) {};
	\node[node, label={left:$\scriptscriptstyle{W+W_1}$}] (V4) at (0,0) {};
	\node[node, label={right:$\scriptscriptstyle{V/W_1}$}] (V5) at (1,0) {};
	\node[node, label={right:$\scriptscriptstyle{W+W_1/W_1}$}] (V6) at (2,1) {};
	\node[node, label={right:$\scriptscriptstyle{j(W\cap W_1)}$}] (V7) at (2,-1) {};
	\node[node, label={right:$\scriptscriptstyle{j(W\cap W_1)\cap W+W_1/W_1}$}] (V8) at (3,0) {};
	\draw[->] (V1) -- (V2);
	\draw[->] (V1) -- (V3);
	\draw[->] (V2) -- (V4);
	\draw[->] (V3) -- (V4);
	\draw[->] (V4) -- (V5);
	\draw[<-] (V5) -- (V6);
	\draw[<-] (V5) -- (V7);
	\draw[<-] (V6) -- (V8);
	\draw[<-] (V7) -- (V8);
	
\end{tikzpicture}	

\end{center}

We note that $\dim((W+W_1)/W_1)=r-k.$  Choose a subset $\{v_{n+1}, \dots,v_{n+l}\}$  of $W$ such that $\{\bar{v}_{n+1}, \dots,\bar{v}_{n+l}\}$ forms a basis of $j(W\cap W_1)\cap (W+W_1)/W_1.$ Extend this to a basis of $(W+W_1)/W_1,$ say, by adjoining $\{\bar{v}_{(n+k)+1},\dots, \bar{v}_{(n+k)+r-k-l}\}$ where $\{v_{(n+k)+1},\dots, v_{(n+k)+r-k-l}\}\subset W.$ Put $v_i=j^{-1}(\bar{v}_{n+i})$ for $1\leq i \leq l.$ Since $j$ is an isomorphism, $\{v_1,\dots, v_l\}$ is a linearly independent set in $W\cap W_1.$  We extend $\{v_1,\dots, v_l\}$  to a basis $\{v_1,\dots,v_k\}$ of $W\cap W_1$ by adjoining the vectors $v_{l+1},\dots,v_k.$

For $1\leq i \leq r-k-l,$ put $v_{k+i}=j^{-1}(\bar{v}_{(n+k)+i}).$  We claim that $\{v_1,\dots,v_k\} \cup \{v_{k+1},\dots, v_{k+(r-k-l)}\}$ is a linearly independent set in $W_1.$ Since $\{v_1,\dots,v_k\}$ and $\{v_{k+1},\dots, v_{k+(r-k-l)}\}$ are both linearly independent, to prove the claim, it suffices to show that
\begin{equation*}\langle v_1,\dots,v_k\rangle \cap \langle v_{k+1},\dots, v_{k+(r-k-l)}\rangle = \{0\}.
\end{equation*}
To this end, suppose $v\in \langle v_1,\dots,v_k\rangle \cap  \langle v_{k+1},\dots, v_{k+(r-k-l)}\rangle.$  Then,  
 \begin{equation*}j(v) \in \langle \bar{v}_{n+1},\dots,\bar{v}_{n+k}\rangle \cap  \langle \bar{v}_{(n+k)+1},\dots, \bar{v}_{(n+k)+(r-k-l)}\rangle \subset j(W\cap W_1)\cap (W+W_1)/W_1.
 \end{equation*}  
 But, $\{\bar{v}_{n+1},\dots,\bar{v}_{n+l}\}\cup \{\bar{v}_{(n+k)+1},\dots, \bar{v}_{(n+k)+r-k-l}\} $ is a linearly independent set and $\{\bar{v}_{n+1},\dots,\bar{v}_{n+l}\}$ is a basis for $j(W\cap W_1)\cap (W+W_1)/W_1.$  Therefore, we obtain $j(v)=0$ which also yields $v=0$ and proves our claim. 
 
 Now, extend $\{v_1,\dots,v_{r-l}\}$ further to a basis of $W_1$ by adjoining, say, $\{v_{r-l+1},\dots,v_n\}.$ To summarize, we have obtained a basis of $W_1,$ namely, $\{v_1,\dots,v_n\}.$ For $l+1\leq i \leq k$ and $r-l+1 \leq i \leq n,$ we choose $v_{n+i} \in V$ such that $j(v_i)=\bar{v}_{n+i}.$ Thus, $\{v_1,\dots,v_n\}$ is a basis of $W_1$ and by our construction,  $\{j(v_1),\dots,j(v_n)\}= \{\bar{v}_{n+1}, \dots, \bar{v}_{2n}\}$  is a basis of $V/W_1.$ This completes the proof of the theorem.
\end{proof}

\section{Description in coordinates}
In this section,  we establish Theorem \ref{mainintro}, Theorem \ref{numberintro} and Theorem \ref{Bruhat}. Also, we give an alternate perspective on obtaining the double coset representatives $w_{k,l}$ appearing in Theorem \ref{mainintro} using certain specific subgroups of $GL_{2n}(F)$ which arise intrinsically from representation theory in \cite{HV2}. This alternate description also proves to be useful in applications as we have observed in \cite{HV2}. 

Throughout this section, we shall fix $V$ to be the $2n$-dimensional vector space $F^{2n}$ with  standard basis  $\mathcal{B}:=\{e_1,\dots,e_{2n}\}.$  We shall identify $\End(V)$ with $\mathcal{M}_{2n}(F)$ via the isomorphism $f\mapsto [f]_{_\mathcal{B}},$ where $[f]_{_\mathcal{B}}$ denotes the matrix of $f$ with respect to $\mathcal{B}.$  The group $GL_{2n}(F)$ is identified with $GL(V)$ under this correspondence. We shall  make use of this elementary fact throughout this section.

\subsection{Double cosets in Coordinates}
\subsubsection{Bounds of $k$ and $l$}
 We first note the following lemma giving the bounds for the integers $k$ and $l$ appearing in Theorem \ref{mainintro}.

\begin{lemma}\label{boundsofkandl}
	For $((W_1,j),W)\in X\times Gr(r,V),$ put
	 $k=\dim(W\cap W_1),$ and 		 $l=\dim(j(W\cap W_1)\cap (W+W_1)/W_1).$ Then, $k$ and $l$ satisfy the following inequalities:
	\begin{enumerate}
		\item $\max\{0,r-n\} \leq k \leq \min\{r,n\}.$ 
		\item $\max\{0,r-n\} \leq l \leq \min\{k,r-k\}.$
	\end{enumerate}	
\end{lemma}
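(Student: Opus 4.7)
The plan is to derive both inequalities purely from the dimension formula $\dim(U_1 + U_2) = \dim U_1 + \dim U_2 - \dim(U_1 \cap U_2)$, together with the ambient constraints $\dim V = 2n$, $\dim W_1 = n$, $\dim W = r$, and the fact that $j\colon W_1 \to V/W_1$ is an isomorphism. Nothing deeper than linear algebra bookkeeping should be required.

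For (1), the upper bound is immediate since $W \cap W_1$ is a subspace of both $W$ and $W_1$, giving $k \le \min\{r,n\}$. For the lower bound, I would apply the dimension formula to obtain $\dim(W + W_1) = r + n - k$, and then use the fact that $W + W_1 \subseteq V$ has dimension at most $2n$ to conclude $k \ge r - n$. Combined with the trivial $k \ge 0$, this gives $k \ge \max\{0, r-n\}$.

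For (2), I would first use that $j$ is injective to get $\dim j(W \cap W_1) = k$, and reuse the computation above to get $\dim((W + W_1)/W_1) = (r + n - k) - n = r - k$. Since $l$ is by definition the dimension of the intersection of these two subspaces of $V/W_1$, we immediately obtain $l \le \min\{k, r-k\}$. For the lower bound, I would observe that both $j(W \cap W_1)$ and $(W + W_1)/W_1$ sit inside $V/W_1$, which has dimension $n$. One more application of the dimension formula inside $V/W_1$ gives
\begin{equation*}
    n \ge \dim\bigl(j(W\cap W_1) + (W+W_1)/W_1\bigr) = k + (r-k) - l = r - l,
\end{equation*}
so $l \ge r-n$, and combined with $l \ge 0$ yields $l \ge \max\{0, r-n\}$.

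There is no real obstacle here; the only conceptual point worth emphasizing is that the lower bound on $l$ comes from applying the dimension formula inside the $n$-dimensional quotient $V/W_1$ rather than inside $V$ itself, whereas the lower bound on $k$ comes from applying it inside $V$. Apart from this distinction, the four inequalities split into matching upper/lower bound pairs derived in the same uniform way.
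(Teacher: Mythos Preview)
Your proposal is correct and follows essentially the same argument as the paper: both bounds on $k$ come from the dimension formula applied inside $V$, and both bounds on $l$ come from applying it inside the $n$-dimensional quotient $V/W_1$, exactly as you outline. There is no meaningful difference between your route and the paper's.
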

\begin{proof}
It is trivial to note $0 \leq \dim(W\cap W_1) \leq \min\{\dim(W),\dim(W_1) \}= \min\{r,n\}.$ Also, $\dim(W+W_1)=\dim(W)+\dim(W_1)-\dim(W\cap W_1)\leq 2n$ gives $\dim(W\cap W_1) \geq \dim(W)+\dim(W_1) -2n.$ So, $\dim(W\cap W_1) \geq r+n-2n =r-n$  which gives $\max\{0,r-n\} \leq k \leq \min\{r,n\}.$ 
	
To prove (2), we first note that 
$\dim((W+W_1)/W_1)= \dim(W)+\dim(W_1)-\dim(W\cap W_1)-\dim(W_1)=r-k.$ 
Since $j$ is an isomorphism, $\dim(j(W\cap W_1))=\dim(W\cap W_1)=k.$ Therefore, $l \leq \min\{k,r-k\}.$ Also, $\dim((W+W_1)/W_1+j(W\cap W_1))=\dim((W+W_1)/W_1)+\dim(j(W\cap W_1))-\dim((W+W_1)/W_1\cap j(W\cap W_1))\leq \dim(V/W_1)=n.$ This yields  $l=\dim((W+W_1)/W_1\cap j(W\cap W_1))\geq \dim((W+W_1)/W_1)+\dim(j(W\cap W_1))-n=(r-k)+k-n=r-n.$ This establishes (2) and the lemma.
\end{proof}

\subsubsection{Explicit double coset representatives}
 We have the following corollary to Theorem \ref{PSpsidoublecosets} which gives an explicit set of double coset representatives of $(S,P_{r,2n-r})$ in $GL_{2n}(F).$

\begin{corollary}\label{doublecosets}
	Let $n$ and  $r$ be integers such that $1\leq r<2n.$  For integers $k$ and $l$ such that $\max\{0,r-n\} \leq k \leq \min\{r,n\}$ and $\max\{0,r-n\}\leq l \leq \min\{k,r-k\}$	define $w_{k,l}\in GL_{2n}(F)$ by
	\begin{equation}\label{definitionwkl}
		w_{k,l}=\begin{bmatrix}
			I_k & 0& 0& 0&0 &0 \\
			0&0 & 0& I_{n-k}&0 & 0\\
			0& I_l &0 &0 &0 &0 \\
			0&0 &0 &0 & I_{k-l}&0 \\
			0& 0& I_{r-(k+l)}&0 &0 &0 \\
			0& 0&0 &0 &0 & I_{n-r+l}
		\end{bmatrix}.
	\end{equation}
	Then,
	\begin{equation}\label{doublecosetwkl}
		\left\{w_{k,l}: \max\{0,r-n\}\leq k \leq \min\{n,r\}, \max\{0,r-n\}\leq l \leq \min\{k,r-k\} \right\}
	\end{equation} is a complete set of $(S,P_{r,2n-r})$-double coset representatives in $GL_{2n}(F).$
\end{corollary}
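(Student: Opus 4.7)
The strategy is to specialize the coordinate-free classification in Theorem \ref{PSpsidoublecosets} to the standard basis setup of $F^{2n}$ and then compute the two invariants at each $w_{k,l}$ by direct inspection.

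First, I would fix $V = F^{2n}$ with standard basis $\{e_1,\dots,e_{2n}\}$ and take $W_1 = \langle e_1,\dots,e_n\rangle$ together with the isomorphism $j\colon W_1 \to V/W_1$ defined by $e_i \mapsto e_{n+i}+W_1$ for $1\le i\le n$. A direct verification against the block form of elements of $P$ shows that the stabilizer $S_{(W_1,j)}$ in $GL(V)=GL_{2n}(F)$ is precisely the Shalika subgroup $S$ of \eqref{definitionSpsi}. Similarly, taking $W = \langle e_1,\dots,e_r\rangle$ identifies its stabilizer with $P_{r,2n-r}$. By Lemma \ref{Goribts-doublecosets} together with the identifications $GL(V)/S \leftrightarrow X$ and $GL(V)/P_{r,2n-r}\leftrightarrow Gr(r,V)$, the double coset $Sg P_{r,2n-r}$ corresponds to the $GL(V)$-orbit of $((W_1,j), g(W))$ under \eqref{GLaction-product}.

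The next step is to unwind the block matrix in \eqref{definitionwkl} as a permutation of the basis vectors and read off $w_{k,l}(W)$. Parsing the six column-blocks against the six row-blocks gives the explicit description
\[
w_{k,l}(W) \;=\; \langle e_1,\dots,e_k\rangle \,\oplus\, \langle e_{n+1},\dots,e_{n+l}\rangle \,\oplus\, \langle e_{n+k+1},\dots,e_{n+r-l}\rangle.
\]
From this one immediately computes: $w_{k,l}(W)\cap W_1 = \langle e_1,\dots,e_k\rangle$ has dimension $k$; the image $j(w_{k,l}(W)\cap W_1) = \langle \bar e_{n+1},\dots,\bar e_{n+k}\rangle$ where $\bar v$ denotes $v+W_1$; the space $(w_{k,l}(W)+W_1)/W_1 = \langle \bar e_{n+1},\dots,\bar e_{n+l},\bar e_{n+k+1},\dots,\bar e_{n+r-l}\rangle$; and consequently, since $l\le k$, the intersection $j(w_{k,l}(W)\cap W_1)\cap (w_{k,l}(W)+W_1)/W_1 = \langle \bar e_{n+1},\dots,\bar e_{n+l}\rangle$ has dimension $l$.

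By Theorem \ref{PSpsidoublecosets}, the orbits are determined by the pair of invariants $(k,l)$, so distinct pairs $(k,l)$ yield distinct double cosets; by Lemma \ref{boundsofkandl}, the invariants of any orbit automatically satisfy the prescribed bounds, so every double coset is represented by some $w_{k,l}$ in the list \eqref{doublecosetwkl}. The only real work is the careful bookkeeping of the six-block decomposition of $w_{k,l}$ to extract the image $w_{k,l}(W)$; once that permutation of basis vectors is identified, the four subspace computations are immediate and I do not anticipate any genuine obstacle, since the hard analytic content has already been absorbed into Theorem \ref{PSpsidoublecosets}.
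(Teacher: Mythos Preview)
Your proposal is correct and follows essentially the same route as the paper: specialize to the standard basis, identify $S_{(W_1,j)}=S$ and $P_W=P_{r,2n-r}$, compute $w_{k,l}(W)=\langle e_1,\dots,e_k,\,e_{n+1},\dots,e_{n+l},\,e_{n+k+1},\dots,e_{n+r-l}\rangle$, read off the two invariants $(k,l)$, and conclude via Theorem~\ref{PSpsidoublecosets} and Lemma~\ref{Goribts-doublecosets}. The only cosmetic difference is that the paper names this subspace $W_{k,l}$ and does not explicitly cite Lemma~\ref{boundsofkandl} for the surjectivity of the invariant map, leaving that implicit.
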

\begin{proof}
Let $W_0'=\langle e_1,\dots,e_r\rangle$ and $W_0=\langle e_1,\dots,e_n\rangle.$ Let $j_0:W_0\to V/W_0$ be the isomorphism defined by $j_0(e_i)=e_{n+i}+W_0$ for $1\leq i\leq n.$ One observes that $P_{W_0'}=P_{r,2n-r}$ and $S_{(W_0,j_0)}=S.$
For each $k,l$ satisfying $\max\{0,r-n\}\leq k \leq \min\{n,r\}, \max\{0,r-n\}\leq l \leq \min\{k,r-k\},$ put 
\begin{equation*}W_{k, l} = \langle e_1, \dots , e_k, e_{n+1}, \dots , e_{n+l}, e_{n+k+1}, \dots, e_{n+k+ (r-(k+l))}\rangle.
\end{equation*}
Clearly, $W_{k,l}\cap W_0=\langle e_1,\dots,e_k\rangle$ and $\dim(W_{k,l}\cap W_0)=k.$ It is also easy to see that $j_0(W_{k,l}\cap W_0)=\langle e_{n+1}+W_0,\dots,e_{n+k}+W_0\rangle$ and $(W_{k,l}+W_0)/W_0=\langle e_{n+1}+W_0, \dots , e_{n+l}+W_0, e_{(n+k)+1}+W_0, \dots, e_{(n+k)+ (r-(k+l))}+W_0\rangle.$ Hence, $j_0(W_{k,l}\cap W_0)\cap (W_{k,l}+W_0)/W_0=\langle e_{n+1}+W_0,\dots,e_{n+l}+W_0\rangle.$ This yields  $\dim(j_0(W_{k,l})\cap (W_{k,l}+W_0)/W_0)=l.$   
By Theorem \ref{PSpsidoublecosets}, 
\begin{equation*}
\{((W_0,j_0),W_{k,l}): \max\{0,r-n\}\leq k \leq \min\{n,r\}, \max\{0,r-n\}\leq l \leq \min\{k,r-k\}\}
\end{equation*}
forms a complete set of orbit representatives for the action of $GL(V)$ on $X\times Gr(r,V).$
		
Let $w_{k,l}$ be as in \eqref{definitionwkl}. Since $w_{k,l}(W_0')=W_{k,l},$  under the identification of $Gr(r,V)$ with $GL_{2n}(F)/P_{r,2n-r},$ the subspace $W_{k,l}$ corresponds to the left coset $w_{k,l}P_{r,2n-r}.$  Similarly, $(W_0,j_0)$ corresponds to $S$ under the identification of $X$ with $GL_{2n}(F)/S.$  Consequently, the orbit representatives for the action of $GL_{2n}(F)$ on $GL_{2n}(F)/S\times GL_{2n}(F)/P_{r,2n-r}$ are given by 
\begin{equation*}\{(S,w_{k,l}P_{r,2n-r}): \max\{0,r-n\}\leq k \leq \min\{n,r\}, \max\{0,r-n\}\leq l \leq \min\{k,r-k\} \}.
\end{equation*} 
We can conclude from the proof of Lemma \ref{Goribts-doublecosets} that a complete set of $(S,P_{r,2n-r})$-double coset representatives in $GL_{2n}(F)$ is given by  
\begin{equation*}
\left\{w_{k,l}: \max\{0,r-n\}\leq k \leq \min\{n,r\}, \max\{0,r-n\}\leq l \leq \min\{k,r-k\} \right\}.\qedhere
\end{equation*}
\end{proof}

\subsubsection{Number of Double cosets}
Note that Corollary \ref{doublecosets} shows in particular that the number of distinct $(S,P_{r,2n-r})$-double cosets in $GL_{2n}(F)$ is finite.  The following theorem counts the number of such double cosets.
\begin{theorem}
	Let $n$ and  $r$ be integers such that $1\leq r<2n.$  Let $N(n,r)$ denote the cardinality of $S\backslash GL_{2n}(F)/P_{r,2n-r}.$
Put $\alpha=\max\{0,r-n\}, \gamma=\min\{r,n\}$ and $\beta=\lfloor \frac{r}{2} \rfloor.$ Then,
\begin{equation}\label{count}
	N(n,r)=\frac{(\beta-\alpha+1)(\beta-\alpha+2)+(\gamma-\beta)(\gamma-\beta+1)}{2}.
\end{equation}

\end{theorem}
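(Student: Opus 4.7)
The plan is to translate the counting problem to a double sum over the index set from Corollary \ref{doublecosets} and then evaluate it as two triangular numbers. By Corollary \ref{doublecosets}, $N(n,r)$ equals the number of pairs $(k,l)$ of integers satisfying $\alpha \leq k \leq \gamma$ and $\alpha \leq l \leq \min\{k,r-k\}$. So the starting point is the identity
\begin{equation*}
N(n,r) \;=\; \sum_{k=\alpha}^{\gamma}\bigl(\min\{k,r-k\} - \alpha + 1\bigr),
\end{equation*}
provided each inner count is non-negative (which I would briefly verify using the bounds of Lemma \ref{boundsofkandl}).

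Next I would split the outer sum at $k=\beta = \lfloor r/2\rfloor$, using that $\min\{k,r-k\}=k$ for $k\leq \beta$ and $\min\{k,r-k\}=r-k$ for $k\geq \beta+1$ (both parities of $r$ are handled uniformly this way, with $k=\beta$ placed in the first sum). This yields
\begin{equation*}
N(n,r) \;=\; \sum_{k=\alpha}^{\beta}(k-\alpha+1) \;+\; \sum_{k=\beta+1}^{\gamma}(r-k-\alpha+1).
\end{equation*}
The first sum is the triangular sum $1+2+\cdots+(\beta-\alpha+1) = \tfrac{(\beta-\alpha+1)(\beta-\alpha+2)}{2}$ after reindexing $j=k-\alpha+1$.

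The key arithmetic observation for the second sum is that $r-\gamma-\alpha+1 = 1$, which I would verify by a quick two-case check: if $r\leq n$ then $\gamma=r$ and $\alpha=0$, while if $r>n$ then $\gamma=n$ and $\alpha=r-n$, so in either case $r-\gamma-\alpha=0$. Reindexing the second sum by $m = \gamma-k$ makes the summand equal to $1+m$ and the range $m=0,1,\dots,\gamma-\beta-1$, giving $\sum_{j=1}^{\gamma-\beta} j = \tfrac{(\gamma-\beta)(\gamma-\beta+1)}{2}$.

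Adding the two closed forms produces the stated expression for $N(n,r)$. The main obstacle is the bookkeeping in the split at $\beta$ and the verification that $r-\gamma-\alpha+1=1$; beyond that the proof is a routine evaluation of two triangular sums, so I do not anticipate any serious technical difficulty.
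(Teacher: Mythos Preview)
Your proposal is correct and follows essentially the same approach as the paper: both start from Corollary \ref{doublecosets}, write $N(n,r)=\sum_{k=\alpha}^{\gamma}(\min\{k,r-k\}-\alpha+1)$, split the sum at $\beta=\lfloor r/2\rfloor$, and evaluate the two pieces as triangular numbers. The only organizational difference is that the paper carries out the cases $r\le n$ and $r>n$ separately and then observes that both yield the same closed form, whereas you absorb that case split into the single identity $r-\gamma-\alpha=0$ and work directly in terms of $\alpha,\beta,\gamma$; this is a mild streamlining, not a different argument.
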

\begin{proof}  
If $r\leq n,$ we have $\alpha=0$ and $\gamma=r.$ On the other hand, if $r>n,$ we have $\alpha=r-n$ and $\gamma=n.$ 	For a fixed integer $k$ satisfying $\alpha\leq k \leq \gamma,$ the integer $l$ assumes values between $\alpha$ and $\min\{k,r-k\}.$ Let $l(k)$ denote the number of corresponding $l$ values for a fixed $k.$ We then have the following table:
	
	\begin{center}
		\begin{tabular}{|c|c|c|c|}
			\hline
			Case &Range of $k$ & $\min\{k,r-k\}$ & $l(k)$ \\
			\hline
			\multirow{2}{*}{$r\leq n$}
			& $0\leq k \leq \lfloor r/2 \rfloor$ & $k$ & $k+1$ \\
			\cline{2-4}
			& $\lfloor r/2 \rfloor +1\leq k \leq r$ & $r-k$ & $r-k+1$ \\
			\hline
			\multirow{2}{*}{$n\leq r$}
			& $r-n\leq k \leq \lfloor r/2 \rfloor$ & $k$ & $k-r+n+1$ \\
			\cline{2-4}
			& $\lfloor r/2 \rfloor +1\leq k \leq n$ & $r-k$ & $n-k+1$ \\
			\hline
		\end{tabular}
	\end{center}
	It is easy to observe that $N(n,r)=\sum_{k}l(k).$ To obtain \eqref{count}, we divide the proof into two cases as to when $r\leq n$ and $r>n.$ To this end, if $r\leq n,$ we obtain
	\begin{eqnarray*}
		\sum_k l(k)=\sum_{k=0}^{r}l(k)=&\displaystyle\sum_{k=0}^{\lfloor \frac{r}{2} \rfloor} (k+1) + \sum_{k=\lfloor \frac{r}{2} \rfloor +1}^{r} (r-k+1)\\=&\displaystyle\frac{\left(\lfloor\frac{r}{2}\rfloor+1\right)\left(\lfloor\frac{r}{2}\rfloor+2\right)}{2}+\frac{\left(r-\lfloor\frac{r}{2}\rfloor\right)\left(r-\lfloor\frac{r}{2}\rfloor+1\right)}{2}.
		\end{eqnarray*}
		If $r > n,$ we get
		\begin{eqnarray*}
			\sum_k l(k)=\sum_{k=r-n}^{n}l(k)=&\displaystyle\sum_{k=r-n}^{\lfloor \frac{r}{2} \rfloor} (k-r+n+1) + \sum_{k=\lfloor \frac{r}{2} \rfloor +1}^{n} (n-k+1)\\
			=&\displaystyle\frac{\left( n-r+\lfloor\frac{r}{2}\rfloor+1\right)\left( n-r+\lfloor\frac{r}{2}\rfloor+2\right)}{2}+\frac{\left(n-\lfloor\frac{r}{2}\rfloor\right)\left(n-\lfloor\frac{r}{2}\rfloor+1\right)}{2}.
			\end{eqnarray*}
	In either case,  $\sum_k l(k)=\displaystyle\frac{(\beta-\alpha+1)(\beta-\alpha+2)+(\gamma-\beta)(\gamma-\beta+1)}{2},$ and the proof of the theorem is complete.	
\end{proof}

\subsection{An alternate description}
In the reminder of this section, we give an alternate perspective on viewing the double coset representatives $w_{k,l}$ obtained in Corollary \ref{doublecosets}  in a way that is  relevant for the representation theory in \cite{HV2}.  To achieve this, we introduce few subgroups.  

\subsubsection{}Let $n$ and  $r$ be 
 integers such that $1\leq r<2n.$ To ease our notation, put  $\alpha=\max\{0, r-n\}$ and $\gamma=\min\{r,n\}.$ For $k$ such that $\alpha\leq k \leq \gamma,$ let $J_{k,\alpha}:= w_{k,\alpha}P_{r,2n-r}w_{k,\alpha}^{-1} \cap P.$ We shall write a block matrix $\begin{pmatrix} 
	a & b\\
	c & d
\end{pmatrix}$ where $a\in \mathcal{M}_{n_1,n_3}(F), b\in \mathcal{M}_{n_1,n_4}(F), c\in \mathcal{M}_{n_2,n_3}(F)$ and $d\in \mathcal{M}_{n_2,n_4}(F)$ by  

\renewcommand{\kbldelim}{(}
\renewcommand{\kbrdelim}{)}
\[
\kbordermatrix{
	&n_3& n_4\\
	n_1& a & b\\
	n_2& c & d},
\]
specifically indicating the sizes of the blocks. 

\subsubsection{}Using  the above notation,  for any $l$ such that $\alpha \leq l \leq \min\{k,r-k\}, $ we write a matrix $p\in P_{r,2n-r}$  as 
\renewcommand{\kbldelim}{(}
\renewcommand{\kbrdelim}{)}
\[
p = \kbordermatrix{
	&k&l& r-k-l& & n-k &k-l&n-r+l\\
	k&g_1&g_2& g_3 & \VR x_1 & x_2&x_3\\
	l&g_4&g_5& g_6 & \VR x_4&x_5& x_6\\
	r-k-l& g_7 & g_8 & g_9& \VR x_7 & x_8 & x_9\\
	\hline
	n-k & 0&0&0& \VR h_1 & h_2 & h_3 \\
	k-l & 0& 0& 0& \VR h_4 & h_5 & h_6\\
	n-r+l & 0& 0& 0& \VR h_7 & h_8 & h_9
}.
\]
We then have
\renewcommand{\kbldelim}{(}
\renewcommand{\kbrdelim}{)}
\begin{equation}\label{wklP-conjugate}
	w_{k,l}p  w_{k,l}^{-1}= \kbordermatrix{
		&k&n-k& & l&k-l&r-k-l&n-r+l\\
		k&g_1&x_1& \VR g_2&x_2&g_3&x_3\\
		n-k&0&h_1& \VR 0&h_2&0&h_3\\
		\hline
		l&g_4&x_4& \VR g_5&x_5&g_6&x_6\\
		k-l&0&h_4& \VR 0&h_5&0&h_6\\
		r-k-l&g_7&x_7& \VR g_8&x_8&g_9&x_9\\
		n-r+l&0&h_7& \VR 0&h_8&0&h_9\\
	}.
\end{equation}

\subsubsection{}
It is now easy to see that the subgroup $J_{k,\alpha}$ of $P$ is given by 

\renewcommand{\kbldelim}{(}
\renewcommand{\kbrdelim}{)}
\[
J_{k,\alpha}= \left\{\kbordermatrix{
	&k&n-k& & \alpha&k-\alpha&r-k-\alpha&n-r+\alpha\\
	k&\ast&\ast& \VR \ast&\ast&\ast&\ast\\
	n-k&0&\ast& \VR 0&\ast&0&\ast\\
	\hline
	\alpha&0&0& \VR \ast&\ast&\ast&\ast\\
	k-\alpha&0&0& \VR 0&\ast&0&\ast\\
	r-k-\alpha&0&0& \VR \ast&\ast&\ast&\ast\\
	n-r+\alpha&0&0& \VR 0&\ast&0&\ast\\
}\right\}.
\]
\subsubsection{}
For a fixed $k,$  we define a subgroup $N_0$ of $GL_{2n}(F)$ and an element $w\in GL_{2n}(F)$ as follows.
Put

  \renewcommand{\kbldelim}{(}
\renewcommand{\kbrdelim}{)}
\[
N_0= \left\{\kbordermatrix{
	&k&n-k& & \alpha&k-\alpha&r-k-\alpha&n-r+\alpha\\
	k&I_k&0& \VR \ast&\ast&\ast&\ast\\
	n-k&0&I_{n-k}& \VR 0&\ast&0&\ast\\
	\hline
	\alpha&0&0& \VR I_{\alpha}&0&0&0\\
	k-\alpha&0&0& \VR 0&I_{k-\alpha}&0&0\\
	r-k-\alpha&0&0& \VR 0&0&I_{r-k-\alpha}&0\\
	n-r+\alpha&0&0& \VR 0&0&0&I_{n-r+\alpha}\\
}\right\},
\]
and 
\begin{equation*}w=\begin{pmatrix}
		I_{\alpha} & 0& 0& 0\\
		0& 0& I_{k-\alpha} &0\\
		0 & I_{r-k-\alpha} & 0 & 0\\
		0& 0& 0& I_{n-r+\alpha}	\end{pmatrix}. \end{equation*}
Also put, \begin{equation}\label{Ptilde}
	\tilde{P}_{r-k,n-r+k}=w{P}_{r-k,n-r+k}w^{-1}.
\end{equation} 
It can then be checked that $N_0$ is a normal subgroup of $J_{k,\alpha}.$ Moreover, we have a semi-direct product
\begin{equation}
	J_{k,\alpha}= (P_{k,n-k}\times \tilde{P}_{r-k,n-r+k})\ltimes N_0.
\end{equation} 

\subsubsection{}
We shall need the following lemma. 
	\begin{lemma}\label{lalternate}
		Let $r$ and $n$ be integers such that $1\leq r<2n.$ Put $\alpha=\max\{0,r-n\},\gamma=\min\{r,n\}$ and let $k$ be an integer such that $\alpha \leq k \leq \gamma.$
	The orbits for the action of $GL_n(F)$ on $Gr(k,F^n) \times Gr(r-k,F^n)$  are parameterized by an integer $l$ satisfying $\alpha \leq l \leq \min\{k,r-k\}.$		
\end{lemma}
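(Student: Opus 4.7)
The plan is to establish that the single integer invariant
\[
l := \dim(W_1 \cap W_2), \quad (W_1,W_2) \in Gr(k,F^n)\times Gr(r-k,F^n),
\]
is a complete invariant of the $GL_n(F)$-orbits, and that it ranges precisely over the stated values. This is the usual classification of ordered pairs of subspaces up to change of basis, so the strategy closely parallels the basis-building argument used in the proof of Theorem \ref{PSpsidoublecosets}, but in the simpler setting without the extra isomorphism $j$.

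First, I would check that $l$ is $GL_n(F)$-invariant: for $g \in GL_n(F)$, injectivity yields $g(W_1 \cap W_2) = g(W_1) \cap g(W_2)$, so $\dim(W_1 \cap W_2)$ is constant on orbits. Next, I would derive the bounds. The upper bound $l \leq \min\{k, r-k\}$ is immediate from $W_1 \cap W_2 \subseteq W_1$ and $W_1 \cap W_2 \subseteq W_2$. For the lower bound, the inclusion $W_1 + W_2 \subseteq F^n$ gives
\[
k + (r-k) - l \;=\; \dim(W_1) + \dim(W_2) - \dim(W_1\cap W_2) \;=\; \dim(W_1 + W_2) \;\leq\; n,
\]
hence $l \geq r - n$, and combining with $l \geq 0$ yields $l \geq \alpha$.

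The main content is to show transitivity within each value of $l$: given $(W_1, W_2)$ and $(W_1', W_2')$ with the same intersection dimension $l$, I would produce a $g \in GL_n(F)$ sending one pair to the other. The recipe is to build adapted bases. Choose a basis $\{v_1,\dots,v_l\}$ of $W_1 \cap W_2$, extend it to a basis $\{v_1,\dots,v_k\}$ of $W_1$ by adjoining $\{v_{l+1},\dots,v_k\}$, and independently extend it to a basis $\{v_1,\dots,v_l,v_{k+1},\dots,v_{r-l}\}$ of $W_2$. The collection $\{v_1,\dots,v_{r-l}\}$ is then a basis of $W_1 + W_2$; since $r - l \leq n$ by the lower-bound computation, we can extend further to a basis $\{v_1,\dots,v_n\}$ of $F^n$. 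Construct a parallel basis $\{v_1',\dots,v_n'\}$ for the primed pair, and define $g \in GL_n(F)$ by $g(v_i) = v_i'$. Then $g(W_1) = W_1'$ and $g(W_2) = W_2'$ by construction. The main (mild) obstacle is simply verifying that the basis extensions can always be carried out; this is clean because $r - l \leq n$ ensures $W_1 + W_2$ is a proper-or-equal subspace of $F^n$.

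Finally, to see that every integer $l$ in the range $\alpha \leq l \leq \min\{k, r-k\}$ occurs as some $\dim(W_1 \cap W_2)$, I would exhibit an explicit pair built from the standard basis $\{e_1,\dots,e_n\}$, for instance
\[
W_1 = \langle e_1,\dots, e_k\rangle, \qquad
W_2 = \langle e_1,\dots, e_l, e_{k+1},\dots, e_{k+(r-k-l)}\rangle,
\]
which has intersection $\langle e_1,\dots,e_l\rangle$ of dimension $l$; the chosen bounds on $l$ ensure that all indices lie in $\{1,\dots,n\}$. Combined with the transitivity step, this shows that the orbits are in bijection with the integers $l$ in the stated range, completing the proof.
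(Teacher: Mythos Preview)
Your proposal is correct and follows essentially the same approach as the paper: both show that $\dim(W_1\cap W_2)$ is $GL_n(F)$-invariant, then construct a basis adapted to $W_1\cap W_2 \subset W_1,W_2 \subset W_1+W_2 \subset F^n$ (and a parallel one for the primed pair) to define the required $g$, and finally note the bounds on $l$. Your write-up is slightly more complete in that you explicitly exhibit a pair realizing each admissible value of $l$, whereas the paper leaves this implicit; conversely, the paper spells out the linear-independence check for $\{v_1,\dots,v_{r-l}\}$ that you assert without proof.
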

	\begin{proof} 
	The group $ GL_n(F)$  acts on  $Gr(k,F^n) \times Gr(r-k,F^n)$
	by $g\cdot (W,W')=(g(W),g(W')).$ Suppose $W \in  Gr(k,F^n), W' \in Gr(r-k,F^n)$ and $g\in  GL_n(F)$ are such that  $g\cdot(W,W')=(W_1,W_1').$ Then, $\dim(W\cap W')=\dim(g(W\cap W'))=\dim(g(W)\cap g(W'))=\dim(W_1\cap W_1').$
	
	Conversely, suppose $(W_1,W_1')\in Gr(k,F^n) \times Gr(r-k,F^n)$ satisfies $\dim(W\cap W')=l=\dim(W_1\cap W_1').$ We shall show that there exists $g\in GL_n(F)$ such that $g\cdot (W,W')=(W_1,W_1').$ To this end, let $\mathcal{B}=\{u_1,\dots, u_l\}$ and $\mathcal{B}'=\{v_1,\dots, v_l\}$ be bases of $W\cap W'$ and $W\cap W_1'$ respectively. Extend $\mathcal{B}$ to a basis $\mathcal{C}$ of $W$ and a basis $\mathcal{D}$ of $W'.$ Write $\mathcal{C}=\mathcal{B}\cup \{u_{l+1},\dots, u_k\}$ and $\mathcal{D}=\mathcal{B}\cup \{u_{k+1},\dots, u_{k+(r-k-l)}\}.$ Similarly,  extend the $\mathcal{B}'$ to a basis $\mathcal{C}'$ of $W_1$ and a basis $\mathcal{D}'$ of $W_1'.$ Also write, $\mathcal{C}'=\mathcal{B}'\cup \{v_{l+1},\dots, v_k\}$ and $\mathcal{D}'=\mathcal{B}'\cup \{v_{k+1},\dots, v_{k+(r-k-l)}\}.$ We claim that $\{u_1,\dots, u_l, u_{l+1}, \dots, u_k, \dots, u_{r-l}\}$ is linearly independent. Accepting the claim, we may extend this linearly independent set to a basis $\{u_1,\dots, u_n\}$ of $F^n.$ In the same vein, we may extend $\{v_1,\dots, v_{r-l}\}$ to a basis $\{v_1,\dots, v_n\}$ of $F^n.$ Define $g$ by setting $u_i\mapsto v_i $ to obtain a $g\in GL_n(F)$ such that $g\cdot(W,W')=(W_1,W_1').$
	To prove the claim, assume that $\sum_{i=1}^{l} a_iu_i+\sum_{i=l+1}^{k} b_iu_i+\sum_{i=k+1}^{r-l} c_iu_i=0.$ We then have, $\sum_{i=1}^{l} a_iu_i+\sum_{i=l+1}^{k} b_iu_i=-\sum_{i=k+1}^{r-l} c_iu_i=x,$ say. But then, $x\in W\cap W'$ forcing all $a_i$'s, $b_i$'s and $c_i$'s to be zero.
	
	If $l=\dim(W\cap W'),$ it is easy to see  that $\max\{0,r-n\}\leq l \leq \min\{k,r-k\}$ by arguments similar to those in the proof of  Lemma \ref{boundsofkandl}(1).   \end{proof}

\subsubsection{Double coset representatives for $S\backslash P/J_{k,\alpha}$}
	
\begin{proposition} \label{doublecosetinsteps} Let $r$ and $n$ be integers such that $1\leq r<2n.$ Put $\alpha=\max\{0,r-n\}$ and $\gamma=\min\{r,n\}.$ 
		Let $k$ be an integer such that $\alpha\leq k \leq \gamma.$ The double coset representatives for  $S\backslash P_{n,n}/J_{k,\alpha}$ are parametrized 
		by an integer $l$ satisfying $\alpha\leq l \leq \min\{k,r-k\}.$
	\end{proposition}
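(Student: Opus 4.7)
The plan is to reduce $S\backslash P/J_{k,\alpha}$ modulo the unipotent radical $N$ of $P$ and thereby pass to an orbit-counting problem on Grassmannians, where Lemma~\ref{lalternate} applies. Two structural inputs are needed. First, $N\subset S$ (since $S=\Delta GL_n(F)\ltimes N$) and $N$ is normal in $P$; given this, for $s\in S$, $n\in N$, $j\in J_{k,\alpha}$, $p\in P$ one has $spnj=s(pnp^{-1})pj\in SpJ_{k,\alpha}$ because $pnp^{-1}\in N\subset S$, so $SpJ_{k,\alpha}=Sp(NJ_{k,\alpha})$ and therefore $S\backslash P/J_{k,\alpha}=S\backslash P/(NJ_{k,\alpha})$. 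Second, the block description of $J_{k,\alpha}$ together with the semidirect decomposition $J_{k,\alpha}=(P_{k,n-k}\times\tilde{P}_{r-k,n-r+k})\ltimes N_0$ shows that $J_{k,\alpha}\cap N=N_0$, so the image of $J_{k,\alpha}$ under the projection $\pi\colon P\to P/N=M$ is exactly $P_{k,n-k}\times\tilde{P}_{r-k,n-r+k}$.

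Since $N=\ker\pi$ is contained in both $S$ and $NJ_{k,\alpha}$, the projection $\pi$ descends to a bijection
\begin{equation*}
S\backslash P/J_{k,\alpha}\;\overset{1:1}{\longleftrightarrow}\;\Delta GL_n(F)\backslash M/\bigl(P_{k,n-k}\times\tilde{P}_{r-k,n-r+k}\bigr).
\end{equation*}
Writing $M=GL_n(F)\times GL_n(F)$, the assignment $(g_1,g_2)\mapsto g_1^{-1}g_2$ gives a bijection between this right-hand side and $P_{k,n-k}\backslash GL_n(F)/\tilde{P}_{r-k,n-r+k}$; right multiplication by $w^{-1}$, with $w$ as in \eqref{Ptilde}, further identifies this with $P_{k,n-k}\backslash GL_n(F)/P_{r-k,n-r+k}$. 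Using the standard identification $GL_n(F)/P_{r-k,n-r+k}=Gr(r-k,F^n)$, this becomes the space of $P_{k,n-k}$-orbits on $Gr(r-k,F^n)$, which, since $P_{k,n-k}$ is the stabilizer of $\langle e_1,\dots,e_k\rangle$ in $GL_n(F)$, is the same as the space of $GL_n(F)$-orbits on $Gr(k,F^n)\times Gr(r-k,F^n)$ under the diagonal action.

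Finally, Lemma~\ref{lalternate} parametrizes these orbits by $l=\dim(W\cap W')$ subject to $\alpha\le l\le\min\{k,r-k\}$, which is precisely the claim. The only step that is not purely formal is the identification $J_{k,\alpha}\cap N=N_0$ and $\pi(J_{k,\alpha})=P_{k,n-k}\times\tilde{P}_{r-k,n-r+k}$, both of which are immediate from the explicit block-matrix description of $J_{k,\alpha}$ given earlier in the subsection; I therefore expect no substantial obstacle beyond bookkeeping.
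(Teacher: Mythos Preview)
Your proof is correct and follows essentially the same route as the paper: reduce modulo the unipotent radical $N$ to pass from $S\backslash P/J_{k,\alpha}$ to $\Delta GL_n(F)\backslash M/(P_{k,n-k}\times\tilde P_{r-k,n-r+k})$, identify the latter with $GL_n(F)$-orbits on $Gr(k,F^n)\times Gr(r-k,F^n)$, and invoke Lemma~\ref{lalternate}. The only differences are cosmetic: the paper outsources the reduction step to a citation \cite[proof of Proposition~7.1]{GS} while you spell it out, and the paper reaches the Grassmannian orbit space directly via the elementary identification $H\backslash G/K=H\text{-orbits on }G/K$ rather than your intermediate passage through $P_{k,n-k}\backslash GL_n(F)/\tilde P_{r-k,n-r+k}$ (note incidentally that $w^{-1}=w$, so your ``right multiplication by $w^{-1}$'' is harmless).
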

	\begin{proof}
		Note that $S=\Delta GL_n(F) \ltimes N, P=(GL_n(F)\times GL_n(F)) \ltimes N$	and $J_{k,\alpha}= (P_{k,n-k}\times \tilde{P}_{r-k,n-r+k})\ltimes N_0$ where $\tilde{P}_{r-k,n-r+k}$ is given by \eqref{Ptilde}. One observes that $S\backslash P_{n,n}/J_{k,\alpha}$ can be identified with (cf.  \cite[proof of Proposition 7.1]{GS}) $\Delta GL_n(F) \backslash GL_n(F)\times GL_n(F)/ P_{k,n-k}\times \tilde{P}_{r-k,n-r+k}.$   Since $GL_n(F)/P_{k,n-k}$ and $GL_n(F)/\tilde{P}_{r-k,n-r+k}$ can be identified with $Gr(k,F^n)$ and $ Gr(r-k,F^n)$ respectively, by Lemma \ref{Goribts-doublecosets}, the double coset space $\Delta GL_n(F) \backslash GL_n(F) \times GL_n(F)/ P_{k,n-k}\times \tilde{P}_{r-k,n-r+k}$ is given by the orbits of the action of $\Delta GL_n(F)\simeq GL_n(F)$ on $Gr(k,F^n) \times Gr(r-k,F^n).$  Our statement now follows from Lemma \ref{lalternate}.
	\end{proof}
	\begin{corollary}\label{wklintermsofwkalpha}
	Let $r$ and $n$ be integers such that $1\leq r<2n.$	Put $\alpha=\max\{0, r-n\}$ and $\gamma=\min\{r,n\}.$ For each integer $k$ satisfying $\alpha\leq k \leq \gamma,$ let  $l$ be an integer such that $\alpha \leq l\leq \min\{k, r-k\}$ and put 
		
		\begin{equation}
			\sigma_{k,l}=\begin{pmatrix}
				I_k& 0& 0& 0& 0 &0 &0 &0\\
				0&I_{n-k}&0&0&0&0&0&0\\
				0& 0& I_{\alpha}&0&0&0&0&0\\
				0&0&0&0&0&I_{l-\alpha}&0&0\\
				0&0&0&I_{k-l}&0&0&0&0\\
				0&0&0&0&0&0&I_{r-(k+l)}&0\\
				0&0&0&0&I_{l-\alpha}&0&0&0\\
				0&0&0&0&0&0&0&I_{n-r+\alpha}
			\end{pmatrix}.
		\end{equation}
		Then,
		\begin{enumerate}
			\item for each $r$ such that $1\leq r <2n$, $\{w_{k,\alpha}: \alpha\leq k \leq  \gamma\}$ forms a complete set of double coset representatives for  the space $P\backslash GL_{2n}(F)/ P_{r,2n-r},$ 
			\item for each $k$ such that $\alpha \leq k \leq \gamma,$ $\{\sigma_{k,l}: \alpha\leq l\leq \min\{k, r-k\}\}$ forms a complete set of double coset representatives for the space $S\backslash P/ J_{k,\alpha}$ and, 
			\item for each $k$ and $l$ satisfying $\alpha\leq k \leq \gamma$ and $\alpha \leq l \leq \min\{k,r-k\},$ we have  $\sigma_{k,l} \cdot  w_{k,\alpha}=w_{k,l}.$
		\end{enumerate}
	\end{corollary}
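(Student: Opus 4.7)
The plan is to address the three assertions in the order (1), (3), (2), since the computation in (3) feeds directly into (2).

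For (1), I would adapt the argument of Theorem \ref{PSpsidoublecosets}, simply forgetting the isomorphism data $j$. By Lemma \ref{Goribts-doublecosets}, $P\backslash GL_{2n}(F)/P_{r,2n-r}$ is in bijection with $GL(V)$-orbits on $Gr(n,V)\times Gr(r,V)$. Running the same basis-extension argument as in Theorem \ref{PSpsidoublecosets} (now with only the invariant $k=\dim(W\cap W_0)$ in play), these orbits are classified by $k$. Lemma \ref{boundsofkandl}(1) supplies the range $\alpha\leq k\leq\gamma$, and the subspace $W_{k,\alpha}=w_{k,\alpha}(W_0')$ constructed in the proof of Corollary \ref{doublecosets} already satisfies $\dim(W_{k,\alpha}\cap W_0)=k$, so $\{w_{k,\alpha}\}$ supplies exactly one representative per orbit.

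For (3), I would perform the block matrix product directly. Both $\sigma_{k,l}$ and $w_{k,\alpha}$ are permutation matrices, so left multiplication by $\sigma_{k,l}$ simply permutes the rows of $w_{k,\alpha}$. Tracking the eight row blocks of $\sigma_{k,l}$ against the six row blocks of $w_{k,\alpha}$, one verifies block by block that the product agrees with $w_{k,l}$: for instance, $\sigma_{k,l}$ splits the $I_{k-\alpha}$ and $I_{r-k-\alpha}$ rows of $w_{k,\alpha}$ into subblocks of sizes $l-\alpha,\,k-l$ and $l-\alpha,\,r-k-l$ respectively, which are precisely the sizes needed to form the $I_l,\,I_{k-l},\,I_{r-k-l},\,I_{n-r+l}$ rows of $w_{k,l}$.

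For (2), I would first check $\sigma_{k,l}\in P$: the first two row blocks of $\sigma_{k,l}$ (of total size $n$) have nonzero entries only in the first two column blocks (of total size $n$), so $\sigma_{k,l}$ lies in $M_{n,n}\subset P$. Proposition \ref{doublecosetinsteps} already gives $|S\backslash P/J_{k,\alpha}|=\min\{k,r-k\}-\alpha+1$, which matches the number of candidate $\sigma_{k,l}$'s; it therefore suffices to show that the $\sigma_{k,l}$'s are pairwise inequivalent modulo $(S,J_{k,\alpha})$-translation. If $\sigma_{k,l}\in S\sigma_{k,l'}J_{k,\alpha}$, then using the containment $J_{k,\alpha}w_{k,\alpha}\subset w_{k,\alpha}P_{r,2n-r}$ (immediate from the definition $J_{k,\alpha}=w_{k,\alpha}P_{r,2n-r}w_{k,\alpha}^{-1}\cap P$) together with (3), I obtain $w_{k,l}\in Sw_{k,l'}P_{r,2n-r}$, forcing $l=l'$ by Corollary \ref{doublecosets}. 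The main obstacle is the index bookkeeping in (3); the remaining assertions are essentially assembly of results already established in the paper.
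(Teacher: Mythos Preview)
Your proposal is correct and for parts (1) and (3) it matches the paper's proof essentially verbatim: the paper also cites the orbit classification on $Gr(r,F^{2n})$ by $\dim(W\cap W_0)$ (referring to \cite{Zelevinsky} rather than re-running the basis argument) and dispatches (3) as ``a direct verification.''

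For (2) there is a mild but genuine difference. The paper verifies directly in the Grassmannian model that $\sigma_{k,l}\cdot(V_0,V_1)=(V_0,V_{k,l})$ with $\dim(V_0\cap V_{k,l})=l$, so by Lemma~\ref{lalternate} and Proposition~\ref{doublecosetinsteps} each $\sigma_{k,l}$ lands in the double coset labeled $l$. You instead argue by pigeonhole: Proposition~\ref{doublecosetinsteps} gives the count, and you separate the $\sigma_{k,l}$'s by pushing forward along $(-)\cdot w_{k,\alpha}$ via (3) and invoking Corollary~\ref{doublecosets}. Your route is slightly slicker in that it avoids introducing the auxiliary spaces $V_0,V_1,V_{k,l}$ and re-uses the main double coset result; the paper's route is more self-contained in that it does not need Corollary~\ref{doublecosets} as an input and makes the correspondence $\sigma_{k,l}\leftrightarrow l$ explicit at the level of the orbit invariant.
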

	\begin{proof}
 Put $V_0=\langle e_1,\dots,e_k\rangle$, $V_1=\langle e_1,\dots, e_{\alpha}, e_{k+1},\dots, e_{k+(r-k-\alpha)}\rangle$, $W_0=\langle e_1,\dots,e_n\rangle$ and  $W_0'=\langle e_1,\dots,e_r \rangle.$  Further put $W_{k, \alpha} = \langle e_1, \dots , e_k, e_{n+1}, \dots , e_{n+\alpha}, e_{n+k+1}, \dots, e_{n+k+ (r-(k+\alpha))}\rangle$ and $V_{k,l}=\langle e_1,\dots,e_l,e_{k+1},\dots,e_{k+(r-(k+l))}\rangle.$ With the identification of $GL_{2n}(F)/ P_{r,2n-r}$ with $Gr(r,F^{2n}),$ we observe that the orbit of an element $W\in Gr(r,2n)$ is determined by $k:=\dim(W\cap W_0)$ (cf. \cite[\S 1.6, pp. 171-172]{Zelevinsky}) and also that $\alpha \leq k \leq \gamma.$ Since $\dim(W_{k,\alpha}\cap W_0)=k$ and $w_{k,\alpha}(W_0')=W_{k,\alpha},$ $\{w_{k,\alpha}: \alpha\leq k \leq  \gamma\}$ forms a complete set of double coset representatives for $P\backslash GL_{2n}(F)/ P_{r,2n-r},$ proving (1).

Under the transitive action of $GL_n(F)$ on $Gr(k,F^n)$ and $Gr(r-k,F^n),$ one observes that the stabilizers in $GL_{n}(F)$ of $V_0$ and $V_1$ are $P_{k,n-k}$ and $\tilde{P}_{r-k,n-r+k}$ respectively.  Note that $\dim(V_0\cap V_{k,l})=l.$ Consider the action of $GL_n(F)\times GL_n(F)$ (embedded diagonally in  $GL_{2n}(F)$) on $Gr(k,F^n) \times Gr(r-k,F^n)$ given by $(g_1,g_2)\cdot (W_1,W_2)=(g_1(W_1), g_2(W_2))$ where $(g_1,g_2)\in GL_n(F)\times GL_n(F)$ and $(W_1,W_2)\in Gr(k,F^n) \times Gr(r-k,F^n).$ Then, $\sigma_{k,l}\in GL_n(F)\times GL_n(F)$  and   $\sigma_{k,l}\cdot(V_0,V_1)=(V_0,V_{k,l}).$ The proof of (2) follows by applying Proposition \ref{doublecosetinsteps}. Also, (3) follows immediately by a direct verification.
\end{proof}

\begin{remark}
We note that the fiber of the map $S\backslash GL_{2n}(F)/ P_{r,2n-r}\to P\backslash GL_{2n}(F)/ P_{r,2n-r}$ lying over $w_{k,\alpha}$ is $\{w_{k,l}: \alpha\leq l \leq \min\{ k,r-k\} \}$ by  Corollary \ref{wklintermsofwkalpha}.
\end{remark}

\subsection{Bruhat decomposition}
In this section, we shall prove Theorem \ref{doublecosets-symmetric-intro}.
\subsubsection{}
 For a positive integer $n,$ let $\mathcal{J}_n=\{1,\dots, n\}.$ Denote by $S_n$ the permutation group  of $\mathcal{J}_n.$  For $1\leq r <2n,$ put $\mathcal{K}_{2n-r}=\{r+1,\dots,2n-r\}$ so that $\mathcal{J}_{2n}=\mathcal{J}_{r} \sqcup \mathcal{K}_{2n-r}.$ Let $\Omega_r(2n)$ denote the set of all subsets of $\mathcal{J}_{2n}$ with cardinality $r.$ The group $S_{2n}$ acts transitively on $\Omega_r(2n)$ and the stabilizer under this action of  $\mathcal{J}_r$ is  $S_{r} \times S_{2n-r},$ where $S_{2n-r}$ is identified with the permutation group of $\mathcal{K}_{2n-r}.$ Let 
\begin{equation}
\Delta S_n=\left\{\sigma\in S_{2n}: \sigma(j)\in \mathcal{J}_n \mbox{ and } \sigma(n+j)=n+\sigma(j), \forall  j\in \mathcal{J}_n \right\}.
\end{equation}

\subsubsection{}The double coset representatives for $\Delta S_n\backslash S_{2n}/S_{r}\times S_{2n-r}$ is given by the orbits for the action of $\Delta S_n$ on $\Omega_r(2n).$ For a subset $A$ of $\mathcal{J}_{2n},$ let $|A|$ denote the cardinality of $A.$ Let $\eta:\mathcal{J}_n\to \mathcal{K}_n$ denote the map $\eta(j)=n+j$ for all $j\in \mathcal{J}_n.$ 

\subsubsection{}The following two lemma's are analogues of the results obtained in Lemma \ref{dimensionspreserved} and Lemma \ref{boundsofkandl}. The proofs of these lemmas  can be obtained  from those of Lemma \ref{dimensionspreserved} and \ref{boundsofkandl} respectively by replacing dimensions of the vector spaces appearing there with cardinality of certain subsets, sum of subspaces with union and quotients of subspaces with set difference.  We include  the proofs here for the sake of completeness. 

\begin{lemma}
For $A,B\in \Omega_{r}(2n),$ assume that there exists $\sigma\in \Delta S_n$ such that $\sigma(A)=B.$ Then,
\begin{enumerate}
	\item  $|A\cap \mathcal{J}_n|=|B\cap \mathcal{J}_n|$ and 
	\item $|\eta(A\cap \mathcal{J}_n) \cap A|=|\eta(B\cap \mathcal{J}_n) \cap B|$.
\end{enumerate}
\end{lemma}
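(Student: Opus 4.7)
The plan is to exploit directly the two defining properties of an element $\sigma\in\Delta S_n$: namely, that $\sigma$ restricts to a permutation of $\mathcal{J}_n$ (and hence also of $\mathcal{K}_n=\mathcal{J}_{2n}\setminus\mathcal{J}_n$), and that $\sigma(n+j)=n+\sigma(j)$ for every $j\in\mathcal{J}_n$. The second property is exactly the statement that $\sigma$ intertwines the shift map $\eta$, i.e.\ $\sigma\circ\eta=\eta\circ\sigma$ on $\mathcal{J}_n$. Once this dictionary is set, the two assertions reduce to routine bookkeeping with the image of $A$ under $\sigma$.

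For (1), I would observe that since $\sigma(\mathcal{J}_n)=\mathcal{J}_n$ and $\sigma$ is a bijection, one has
\begin{equation*}
\sigma(A\cap\mathcal{J}_n)=\sigma(A)\cap\sigma(\mathcal{J}_n)=B\cap\mathcal{J}_n,
\end{equation*}
so the two sets are in bijection and thus share the same cardinality. For (2), I would combine the intertwining relation with part (1) to compute
\begin{equation*}
\sigma\bigl(\eta(A\cap\mathcal{J}_n)\bigr)=\eta\bigl(\sigma(A\cap\mathcal{J}_n)\bigr)=\eta(B\cap\mathcal{J}_n),
\end{equation*}
and then intersect with $\sigma(A)=B$ to obtain
\begin{equation*}
\sigma\bigl(\eta(A\cap\mathcal{J}_n)\cap A\bigr)=\eta(B\cap\mathcal{J}_n)\cap B.
\end{equation*}
Bijectivity of $\sigma$ then gives the equality of cardinalities.

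There is no real obstacle: the heart of the matter is simply translating the vector-space language of Lemma \ref{dimensionspreserved} (dimensions, direct sums, quotients, and the isomorphism $j$) into the combinatorial vocabulary of this lemma (cardinalities, unions, set-differences, and the map $\eta$). Once the correspondence is made, the proof is a one-line bijective argument, in marked contrast to the more delicate verification needed in Lemma \ref{dimensionspreserved}, where the non-linearity of subspaces forced an inclusion argument followed by a swap of roles. Here the set-theoretic operations commute with $\sigma$ automatically, so no swap is required.
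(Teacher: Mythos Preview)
Your proof is correct and, for part (2), slightly more streamlined than the paper's own argument. The paper establishes only the inclusion $\sigma(\eta(A\cap\mathcal{J}_n))\subset\eta(B\cap\mathcal{J}_n)$ by an element-by-element check, deduces the inequality $|\eta(A\cap\mathcal{J}_n)\cap A|\leq|\eta(B\cap\mathcal{J}_n)\cap B|$, and then swaps the roles of $A,B,\sigma$ with $B,A,\sigma^{-1}$ to obtain the reverse inequality---deliberately mirroring the structure of the vector-space Lemma~\ref{dimensionspreserved}. You instead observe outright that the defining relation $\sigma(n+j)=n+\sigma(j)$ is the commutation $\sigma\circ\eta=\eta\circ\sigma$ on $\mathcal{J}_n$, which upgrades the inclusion to an equality $\sigma(\eta(A\cap\mathcal{J}_n))=\eta(B\cap\mathcal{J}_n)$ in one step and renders the swap unnecessary. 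Both arguments rest on the same ingredients; yours simply extracts the full strength of the $\Delta S_n$ condition at once, while the paper's version preserves the formal parallel with the linear-algebra lemma at the cost of a redundant step.
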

	
\begin{proof}
Since $\sigma$ is a bijection, $|A\cap \mathcal{J}_n|=|\sigma(A\cap \mathcal{J}_n)|=|B\cap \mathcal{J}_n|,$ proving (1).  We claim that
\begin{equation}\label{observe}
	\sigma(\eta(A\cap \mathcal{J}_n))\subset \eta(B\cap \mathcal{J}_n).
\end{equation}
 Let $y\in \eta(A\cap \mathcal{J}_n).$ Write $y=\eta(x),x\in A\cap \mathcal{J}_n$ so that $y=x+n\in \mathcal{K}_n.$ It follows  that $\sigma(y)=\sigma(x+n)=\sigma(x)+n$ as $\sigma\in \Delta S_n.$ But $\sigma(x)+n=\eta(\sigma(x)).$ As $\sigma(A)=B,$ we have $\sigma(x)\in B\cap \mathcal{J}_n$ and consequently $\sigma(y)\in \eta(B\cap \mathcal{J}_n)$ establishing \eqref{observe}.  Now, $\sigma(\eta(A\cap \mathcal{J}_n)\cap A)=\sigma(\eta(A\cap \mathcal{J}_n))\cap \sigma(A)\subset  \eta(B\cap \mathcal{J}_n)\cap \sigma(A)=\eta(B\cap \mathcal{J}_n)\cap B.$ Since $\sigma$ is one-one, by  \eqref{observe}, $|\eta(A\cap \mathcal{J}_n) \cap A|\leq |\eta(B\cap \mathcal{J}_n) \cap B|.$ Replacing $A,B,\sigma$ by $B,A,\sigma^{-1}$ establishes the inequality in the reverse direction as well, completing the proof of (2).
 \end{proof}
\begin{lemma}
For $A\in \Omega_r(2n),$ let $k=|A\cap \mathcal{J}_n|$ and $l=|\eta(A\cap \mathcal{J}_n) \cap A|.$  Put $\alpha=\max\{0,r-n\}$ and $\gamma=\min\{r,n\}.$ Then, $\alpha\leq k \leq \gamma $ and $\alpha \leq l \leq \min\{k,r-k\}.$
\end{lemma}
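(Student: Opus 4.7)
The plan is to mimic the proof of Lemma \ref{boundsofkandl} verbatim, replacing dimensions of subspaces with cardinalities of subsets, sums of subspaces with unions, and quotients with set differences. The entire argument is elementary counting via the inclusion--exclusion principle $|X \cup Y| = |X| + |Y| - |X \cap Y|$, together with the two obvious facts that $A \subseteq \mathcal{J}_{2n}$ has $|A| = r$, $|\mathcal{J}_n| = |\mathcal{K}_n| = n$, and that $\eta : \mathcal{J}_n \to \mathcal{K}_n$ is a bijection (so $|\eta(T)| = |T|$ for any $T \subseteq \mathcal{J}_n$).

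For the bounds on $k = |A \cap \mathcal{J}_n|$, I would argue as follows. Since $A \cap \mathcal{J}_n$ is contained in both $A$ and $\mathcal{J}_n$, we immediately have $k \leq \min\{r, n\} = \gamma$. For the lower bound, $A \cup \mathcal{J}_n \subseteq \mathcal{J}_{2n}$ gives $|A \cup \mathcal{J}_n| \leq 2n$, and then inclusion--exclusion yields
\begin{equation*}
k = |A \cap \mathcal{J}_n| = |A| + |\mathcal{J}_n| - |A \cup \mathcal{J}_n| \geq r + n - 2n = r - n.
\end{equation*}
Combined with $k \geq 0$ we obtain $k \geq \alpha$, proving the first assertion.

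For the bounds on $l = |\eta(A \cap \mathcal{J}_n) \cap A|$, first note that $\eta(A \cap \mathcal{J}_n) \subseteq \mathcal{K}_n$, so the intersection with $A$ equals $\eta(A \cap \mathcal{J}_n) \cap (A \cap \mathcal{K}_n)$. Therefore $l \leq |\eta(A \cap \mathcal{J}_n)| = k$ and $l \leq |A \cap \mathcal{K}_n| = r - k$ (using that $|A \cap \mathcal{J}_n| + |A \cap \mathcal{K}_n| = |A| = r$), giving $l \leq \min\{k, r-k\}$. For the lower bound, the union $\eta(A \cap \mathcal{J}_n) \cup (A \cap \mathcal{K}_n)$ is contained in $\mathcal{K}_n$, so inclusion--exclusion gives
\begin{equation*}
l = |\eta(A \cap \mathcal{J}_n)| + |A \cap \mathcal{K}_n| - |\eta(A \cap \mathcal{J}_n) \cup (A \cap \mathcal{K}_n)| \geq k + (r - k) - n = r - n,
\end{equation*}
and with $l \geq 0$ this yields $l \geq \alpha$, completing the proof.

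There is no real obstacle here; the only thing to be careful about is that the combinatorial analogue of ``$(W + W_1)/W_1$'' is $A \cap \mathcal{K}_n$ (the part of $A$ living outside $\mathcal{J}_n$), which has cardinality $r-k$, exactly parallel to the dimension formula $\dim((W + W_1)/W_1) = r - k$ used in Lemma \ref{boundsofkandl}. Once this dictionary is set up, the bounds fall out by two applications of inclusion--exclusion.
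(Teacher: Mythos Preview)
Your proof is correct and follows essentially the same approach as the paper's: both arguments use inclusion--exclusion on the relevant subsets, noting that $\eta(A\cap\mathcal{J}_n)\subset\mathcal{K}_n$ and $|A\cap\mathcal{K}_n|=r-k$. Your treatment of the lower bound on $l$ via $\eta(A\cap\mathcal{J}_n)\cup(A\cap\mathcal{K}_n)\subseteq\mathcal{K}_n$ is in fact marginally more direct than the paper's route through $\eta(A\cap\mathcal{J}_n)\cup A\subset\mathcal{K}_n\cup A$, but the content is identical.
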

\begin{proof}
It is clear that $|A\cup \mathcal{J}_n|=r+n-k\leq 2n$ yields $k\geq r-n$ and therefore $k\geq \alpha.$  Comparing $|A\cap \mathcal{J}_n|$ with $|A|$ and $|\mathcal{J}_n|$ gives $k\leq \gamma$ proving $\alpha\leq k \leq \gamma.$

Note that $|A\cap \mathcal{K}_n|=r-k, |A\cup \mathcal{K}_n|=n+k$  and $|\eta(A\cap \mathcal{J}_n)|=k.$ From this, we obtain that $|\eta(A\cap \mathcal{J}_n) \cup A|=k+r-l.$ As $\eta(A\cap \mathcal{J}_n) \subset \mathcal{K}_n$, we have $\eta(A\cap \mathcal{J}_n)\cup A \subset \mathcal{K}_n\cup A$  yielding $k+r-l\leq n+k$ or $l\geq r-n.$ We can conclude that $l\geq \alpha.$ On the other hand, it is easy to see that $l\leq \min\{k,r-k\}$ as $\eta(A\cap \mathcal{J}_n)\cap A$ is a subset of both $\mathcal{K}_n\cap A$ and $\eta(A\cap \mathcal{J}_n).$  \end{proof}

\subsubsection{}
The following proposition is the companion to Theorem \ref{PSpsidoublecosets}.
\begin{proposition}\label{symmetricmain}
Suppose $A,B\in \Omega_r(2n)$ are such that  $|A\cap \mathcal{J}_n|=|B\cap \mathcal{J}_n|$ and $|\eta(A\cap \mathcal{J}_n) \cap A|=|\eta(B\cap \mathcal{J}_n) \cap B|.$Then there exists $\sigma\in \Delta S_n$ such that $\sigma(A)=B.$ 
	\end{proposition}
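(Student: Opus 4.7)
The plan is to translate the statement into a pure combinatorial question about $S_n$ acting on subsets of $\mathcal{J}_n$, and then verify the four cardinality conditions needed to match Venn‑diagram regions.

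First, I would fix notation by recording the canonical isomorphism $\Delta S_n \simeq S_n$. Every $\sigma \in \Delta S_n$ is of the form $\sigma_\tau$ for a unique $\tau \in S_n$, where $\sigma_\tau(j)=\tau(j)$ for $j\in \mathcal{J}_n$ and $\sigma_\tau(n+j)=n+\tau(j)$. For a subset $C\in \Omega_r(2n)$, set
\[
C' = C\cap \mathcal{J}_n \subset \mathcal{J}_n, \qquad C'' = \eta^{-1}(C\cap \mathcal{K}_n) \subset \mathcal{J}_n.
\]
Then $|C'|=|C\cap \mathcal{J}_n|$ and $|C''|=r-|C\cap \mathcal{J}_n|$, and a direct check shows that $\sigma_\tau(C)=D$ holds if and only if $\tau(C')=D'$ and $\tau(C'')=D''$ in $\mathcal{J}_n$.

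Next, I would compute the cardinalities of the four regions into which the pair $(A',A'')$ partitions $\mathcal{J}_n$, and compare with those for $(B',B'')$. Observe that
\[
A'\cap A'' \;=\; \eta^{-1}\bigl(\eta(A\cap \mathcal{J}_n)\cap A\bigr),
\]
so the hypothesis $|\eta(A\cap \mathcal{J}_n)\cap A|=|\eta(B\cap \mathcal{J}_n)\cap B|$ says precisely that $|A'\cap A''|=|B'\cap B''|$; call this value $l$. With $k:=|A'|=|B'|$ we then get $|A'\setminus A''|=k-l=|B'\setminus B''|$, $|A''\setminus A'|=(r-k)-l=|B''\setminus B'|$, and finally
\[
|\mathcal{J}_n\setminus (A'\cup A'')| \;=\; n-r+l \;=\; |\mathcal{J}_n\setminus (B'\cup B'')|,
\]
so all four analogous regions of $(A',A'')$ and $(B',B'')$ have equal cardinality.

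Finally, I would construct $\tau$ by choosing any bijections between the corresponding regions: pick bijections $A'\cap A''\to B'\cap B''$, $A'\setminus A''\to B'\setminus B''$, $A''\setminus A'\to B''\setminus B'$ and $\mathcal{J}_n\setminus(A'\cup A'')\to \mathcal{J}_n\setminus(B'\cup B'')$, and let $\tau\in S_n$ be their union. By construction $\tau(A')=B'$ and $\tau(A'')=B''$, hence $\sigma_\tau(A)=B$ with $\sigma_\tau\in \Delta S_n$, completing the proof. There is no real obstacle here: the content is entirely in the correct bookkeeping that translates the two given cardinality equalities into the four Venn‑region equalities, which is the step I would write out most carefully.
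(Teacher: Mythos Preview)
Your proposal is correct and follows essentially the same approach as the paper: both reduce the question to finding a permutation of $\mathcal{J}_n$ that simultaneously carries $A\cap\mathcal{J}_n$ to $B\cap\mathcal{J}_n$ and $\eta^{-1}(A\cap\mathcal{K}_n)$ to $\eta^{-1}(B\cap\mathcal{K}_n)$, and both construct it by matching the four regions of the induced partition of $\mathcal{J}_n$. Your Venn-diagram bookkeeping is a tidier packaging of what the paper does by explicitly enumerating elements $x_1,\dots,x_n$ region by region, but the mathematical content is the same.
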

	\begin{proof}
We first show that we can write $\mathcal{J}_{2n}=\{x_1,\dots, x_{2n}\}$ such that the following holds:
	\begin{enumerate}
			\item[(i)] $\mathcal{J}_n=\{x_1,\dots,x_n\}$ and  $\mathcal{K}_n=\{x_{n+1},\dots,x_{2n}\}$ 
			\item[(ii)] $A=\{x_1,\dots,x_k\}\cup \{x_{n+1},\dots,x_{n+l}\}\cup\{x_{(n+k)+1},\dots, x_{(n+k)+r-k-l}\}$  and
			\item[(iii)]   $x_{n+j}=x_j+n=\eta(x_j)$ for $1\leq j \leq n.$
\end{enumerate}
Since $|\eta(A\cap \mathcal{J}_n)\cap A|=l,$ we can find elements $x_1,\dots x_l\in A\cap \mathcal{J}_n$ such that $\eta(A\cap \mathcal{J}_n)\cap \mathcal{K}_n=\{\eta(x_1),\dots \eta(x_l)\}.$ Since $|A\cap \mathcal{K}_n|=r-k,$ there exists $z_1,\dots, z_{r-k-l}\in (A\cap \mathcal{K}_n )\setminus \{\eta(x_1),\dots \eta(x_l)\}$ so that  we can write $A\cap \mathcal{K}_n=\{\eta(x_1),\dots, \eta(x_l), z_{1}, \dots, z_{r-k-l}\}.$
Similarly, we can find $x_{l+1},\dots, x_k\in (A\cap \mathcal{J}_n)\setminus \{x_1,\dots, x_l\}$ so that we have  $A\cap \mathcal{J}_n=\{x_1,\dots, x_l, x_{l+1}, \dots x_k\}.$ Put $x_{k+j}=\eta^{-1}(z_j)$ for $1\leq j \leq r-k-l.$ We claim that $ \{x_1,\dots x_k\}\cap \{x_{k+1}, \dots, x_{k+(r-k-l)}\}=\emptyset.$
In other words, our claim is $A\cap \mathcal{J}_n \cap \{\eta^{-1}(z_1),\dots, \eta^{-1}(z_{r-k-l})\}=\emptyset.$ Suppose  $x\in A\cap \mathcal{J}_n$ and $x=\eta^{-1}(z_j)$ for some $j.$ Then, $\eta(x)\in \eta(A\cap \mathcal{J}_n)\cap A\cap \mathcal{K}_n\subset  \eta(A\cap \mathcal{J}_n)\cap A$ yielding $\eta(x)\in \{\eta(x_1),\dots, \eta(x_l)\}$, a contradiction, as $\eta(x)=z_j$ is chosen in such a way that $z_j\notin \{\eta(x_1),\dots, \eta(x_l)\}.$ We have established that $\{x_1\dots,x_k,x_{k+1},\dots, x_{k+(r-k-l)}\}$ is a subset of $\mathcal{J}_n$ with cardinality $r-l.$ As $A$ is the disjoint union of $A\cap \mathcal{J}_n$ and $A\cap \mathcal{K}_n$ we can write
		$$A=\{x_1,\dots, x_k, \eta(x_1),\dots, \eta(x_l), \eta(x_{k+1}),\dots, \eta(x_{k+(r-k-l)})\}.$$
Also, it is clear that $\eta(A\cap \mathcal{J}_n)=\{\eta(x_1), \dots, \eta(x_k)\}.$ We can now find $n-r+l$ elements $x_{r-l+1},\dots, x_{n}\in \mathcal{J}_n\setminus \{x_1\dots,x_k,x_{k+1},\dots, x_{r-l}\}.$ Then, as $\eta$ is one-one
 $$\{\eta(x_{r-l+1}), \dots, \eta(x_n)\}\in \mathcal{K}_n\setminus (\eta(A\cap \mathcal{J}_n) \cup A\cap \mathcal{K}_n).$$
We have shown that $\mathcal{K}_n=\{\eta(x_1),\dots \eta(x_k), \eta(x_{k+1}),\dots, \eta(x_{r-l}),\eta(x_{r-l+1}),\dots, \eta(x_n)\}.$ Our claims (i), (ii) and (iii) are established.

By what we have proved, we can also write $\mathcal{J}_{2n}=\{x_1',\dots, x_{2n}'\}$ such that  $\mathcal{J}_n=\{x_1',\dots,x_n'\}$ and  $\mathcal{K}_n=\{x_{n+1}',\dots,x_{2n}'\},$ 
	$B=\{x_1',\dots,x_k'\}\cup \{x_{n+1}',\dots,x_{n+l}'\}\cup\{x_{(n+k)+1}',\dots, x_{(n+k)+r-k-l}'\}$  and
  $x_{n+j}'=x_j'+n=\eta(x_j')$ for $1\leq j \leq n.$ We may then define $\sigma:\mathcal{J}_{2n}\to \mathcal{J}_{2n}$ by $\sigma(x_j)=x_j'$ for $1\leq j \leq 2n.$ Clearly $\sigma\in S_{2n}$ and $\sigma(A)=B.$ If $x\in \mathcal{J}_n,$ it is obvious that $\sigma(x)\in \mathcal{J}_n.$ Suppose $x=x_j$ for some $j$ such that $1\leq j \leq n.$ Then, $\sigma(n+x)=\sigma(n+x_j)=\sigma(\eta(x_j))=\eta(x_j')=x_j'+n=\sigma(x_j)+n=\sigma(x)+n.$ Thus, $\sigma$ belongs to $\Delta S_n$ as required.
\end{proof}
\subsubsection{}	
Recall that we defined  $w_{k,l}'\in S_{2n}$ as follows: $w_{k,l}'$ maps $j\mapsto j$ for $1\leq j \leq k$ and $n+r-l+1\leq j\leq 2n,$ $k+j \mapsto n+j$ for $1\leq j\leq l,$ $k+l+j\mapsto n+k+j$ for $1\leq j \leq r-(k+l),$  $r+j\mapsto k+j$ for $1\leq j\leq n-k$ and $n+r-k+j\mapsto n+k+j$ for $1\leq j\leq k-l.$  If we regard $S_{2n}$ as a subgroup of $GL_{2n}(F)$  as permutation matrices, the permutation $w_{k,l}'$ corresponds to the matrix $w_{k,l}.$  The final result of our article is the following. 

\begin{theorem}\label{doublecosets-symmetric}
Let $n$ and  $r$ be integers such that $1\leq r<2n.$  Put $\alpha=\max\{0,r-n\}$ and $\gamma=\min\{r,n\}.$ For integers $k$ and $l$ such that $\alpha \leq k \leq \gamma$ and $\alpha\leq l \leq \min\{k,r-k\}$ define $w_{k,l}\in GL_{2n}(F)$ by \eqref{definitionwkl}.
Then, $w_{k,l}\in S_{2n}$  and $\left\{w_{k,l}: \alpha\leq k \leq \gamma, \alpha\leq l \leq \min\{k,r-k\} \right\}$
is a complete set of $(\Delta S_n ,S_{r}\times S_{2n-r})$-double coset representatives in $S_{2n}.$ In particular, we have a bijection 
$S\backslash GL_{2n}(F) /P_{r,2n-r} \overset{1:1}{\longleftrightarrow}\Delta S_n\backslash S_{2n}/S_r \times S_{2n-r}.$
\end{theorem}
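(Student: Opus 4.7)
The plan is to follow a strategy parallel to that for $S \backslash GL_{2n}(F)/P_{r,2n-r}$, using the combinatorial analogues assembled in Section 4.3 (the two preceding lemmas and Proposition \ref{symmetricmain}). First, a direct look at \eqref{definitionwkl} shows that $w_{k,l}$ is a block matrix whose blocks are either zero or identity matrices, arranged so that each row and each column contains exactly one entry equal to $1$; hence $w_{k,l} \in S_{2n}$, consistent with the combinatorial description of $w_{k,l}'$ recalled just before the theorem. Applying Lemma \ref{Goribts-doublecosets} to the pair $(\Delta S_n, S_r \times S_{2n-r})$ in $S_{2n}$, and noting that $S_{2n}$ acts transitively on $\Omega_r(2n)$ with stabilizer $S_r \times S_{2n-r}$ at $\mathcal{J}_r$, we obtain an identification $S_{2n}/(S_r \times S_{2n-r}) \cong \Omega_r(2n)$ via $\sigma(S_r \times S_{2n-r}) \leftrightarrow \sigma(\mathcal{J}_r)$, under which $(\Delta S_n, S_r \times S_{2n-r})$-double cosets in $S_{2n}$ correspond to $\Delta S_n$-orbits on $\Omega_r(2n)$.

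Next, the two preceding lemmas together with Proposition \ref{symmetricmain} say that the pair of functions $A \mapsto |A \cap \mathcal{J}_n|$ and $A \mapsto |\eta(A \cap \mathcal{J}_n) \cap A|$ is $\Delta S_n$-invariant, takes values precisely in the range $\alpha \leq k \leq \gamma$, $\alpha \leq l \leq \min\{k, r-k\}$, and completely separates the orbits. To match each $w_{k,l}$ with the orbit indexed by $(k,l)$, I compute $w_{k,l}(\mathcal{J}_r)$ directly from \eqref{definitionwkl}: the three column-blocks of width $k$, $l$ and $r-(k+l)$ sitting in the first $r$ columns send $\{1,\dots,k\}, \{k+1,\dots,k+l\}, \{k+l+1,\dots,r\}$ respectively to $\{1,\dots,k\}, \{n+1,\dots,n+l\}, \{n+k+1,\dots,n+k+(r-k-l)\}$, giving
\[
w_{k,l}(\mathcal{J}_r) \;=\; \{1,\dots,k\} \,\cup\, \{n+1,\dots,n+l\} \,\cup\, \{n+k+1,\dots,n+k+(r-k-l)\}.
\]
A routine count then yields $|w_{k,l}(\mathcal{J}_r) \cap \mathcal{J}_n| = k$ and $|\eta(w_{k,l}(\mathcal{J}_r) \cap \mathcal{J}_n) \cap w_{k,l}(\mathcal{J}_r)| = l$, so $w_{k,l}$ represents exactly the orbit with invariants $(k,l)$. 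Since the pairs $(k,l)$ are distinct and exhaust all admissible invariants, the set $\{w_{k,l}\}$ is a complete set of $(\Delta S_n, S_r \times S_{2n-r})$-double coset representatives in $S_{2n}$.

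Finally, the bijection \eqref{Bruhat} is immediate: by Corollary \ref{doublecosets}, the same family $\{w_{k,l}\}$ is a complete set of $(S, P_{r,2n-r})$-double coset representatives in $GL_{2n}(F)$, so the correspondence $S\, w_{k,l}\, P_{r,2n-r} \longmapsto \Delta S_n\, w_{k,l}\,(S_r \times S_{2n-r})$ is a well-defined bijection between the two double coset spaces. The substantive content of the entire argument is supplied by Proposition \ref{symmetricmain}, which has already been established; the only real work left is the direct verification that $w_{k,l}(\mathcal{J}_r)$ carries the invariants $(k,l)$, and I expect this purely bookkeeping step to be the one requiring the most care, since the block sizes must be tracked consistently against the formula for $w_{k,l}'$ given just before the theorem.
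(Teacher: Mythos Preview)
Your proposal is correct and follows essentially the same approach as the paper: identify $(\Delta S_n, S_r\times S_{2n-r})$-double cosets with $\Delta S_n$-orbits on $\Omega_r(2n)$, invoke Proposition~\ref{symmetricmain} and the preceding lemmas to see that orbits are classified by the two cardinalities, and verify directly that $w_{k,l}(\mathcal{J}_r)$ has invariants $(k,l)$; the final bijection then follows from Corollary~\ref{doublecosets}. The paper's proof is organized identically, introducing the set $A_{k,l}:=w_{k,l}(\mathcal{J}_r)$ and checking the same invariants.
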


\begin{proof} By Proposition \ref{symmetricmain}, the orbit of an element $A\in \Omega_r(2n)$ for the action of $\Delta S_n$ on $\Omega_r(2n)$ is determined by $|A\cap \mathcal{J}_n|$ and $|\eta(A\cap \mathcal{J}_n)\cap A.$
For each $k,l$ satisfying $\alpha \leq k \leq \gamma, \alpha\leq l \leq \min\{k,r-k\},$ put $A_{k, l} = \{1, \dots , k, n+1, \dots , n+l, n+k+1, \dots, n+k+ (r-(k+l))\}.$ Clearly, $A_{k,l}\cap \mathcal{J}_n=\{1,\dots,k\}, \eta(A_{k,l}\cap \mathcal{J}_n)=\{n+1,\dots, n+k \}$ and $A\cap \mathcal{K}_n=\{ n+1, \dots , n+l, n+k+1, \dots, n+k+ (r-(k+l))\}.$ Hence, $\eta(A_{k,l}\cap \mathcal{J}_n)\cap A=\{ n+1,\dots, n+l\}.$  In view of this, if $(k,l)\neq (k',l')$ it is easy to see that $A_{k,l}\neq A_{k',l'}.$ Under the identification of $S_{2n}/S_{r}\times S_{2n-r}$ with $\Omega_r(2n),$ the left coset $ \sigma (S_r\times S_{2n-r})$ corresponds to the subset $\sigma(\mathcal{J}_r).$  Let $w_{k,l}$ be as in \eqref{definitionwkl}. Clearly, $w_{k,l}\in S_{2n}.$ Since $w_{k,l}(\mathcal{J}_r)=A_{k,l},$  $\{A_{k,l}: \alpha \leq k \leq \gamma, \alpha \leq l \leq \min\{k,r-k\} \}$ is a complete set of orbit representatives for the action of $\Delta S_{n}$ on $S_{2n}/S_{r}\times S_{2n-r}.$  Consequently, $\left\{w_{k,l}: \alpha\leq k \leq \gamma, \alpha\leq l \leq \min\{k,r-k\} \right\}$
		is a complete set of $(\Delta S_n ,S_{r}\times S_{2n-r})$-double coset representatives in $S_{2n}.$  \end{proof}

\section*{Acknowledgements}
This work forms a part of the PhD thesis of C. Harshitha at the Indian Institute of Science Education and Research Tirupati and she acknowledges the financial support provided by the National Board of Higher Mathematics (NBHM), Govt of India and  Prime Minister's Research Fellowship (PMRF ID 0900452) program, Govt of India during this work. The authors thank Dipendra Prasad for suggesting the coordinate free approach undertaken in the article and for suggesting that a result such as Theorem \ref{doublecosets-symmetric-intro} shall hold. The authors also thank him for constant encouragement and numerous helpful conversations  during the preparation of this article.

\end{document}